\newtheorem{thm}{Theorem}[section]
\newtheorem{cor}[thm]{Corollary}
\newtheorem{lem}[thm]{Lemma}
\newtheorem*{claim*}{Claim}
\theoremstyle{definition}
\newtheorem{defi}[thm]{Definition}
\newtheorem{prob}[thm]{Problem}
\newcommand{\E}{\mathcal{E}}
\renewcommand{\O}{\mathcal{O}}
\renewcommand{\AE}{\mathcal{AE}}
\renewcommand{\P}{\mathcal{P}}
\newcommand{\R}{\mathcal{R}}
\newcommand{\AR}{\mathcal{AR}}
\renewcommand{\O}{\mathcal{O}}
\newcommand{\X}{\mathcal{X}}
\newcommand{\leqc}{\leq^{\circ}}
\newcommand{\U}{\mathcal{U}}
\newcommand{\V}{\mathcal{V}}
\newcommand{\W}{\mathcal{W}}
\newcommand{\I}{\mathcal{I}}
\DeclareMathOperator{\range}{range}
\DeclareMathOperator{\depth}{depth}
\DeclareMathOperator{\dom}{dom}
\begin{document}

\title[Topological Ramsey spaces of equivalence relations]{Topological Ramsey spaces of equivalence relations and a dual Ramsey theorem for countable ordinals}
\date{\today}

\author[J. K. Kawach]{Jamal K. Kawach}
\thanks{The first author is partially supported by an Ontario Graduate Scholarship.}
\address{Department of Mathematics\\ University of Toronto\\ Toronto, Canada, M5S 2E4.}
\email{jamal.kawach@mail.utoronto.ca}
\urladdr{https://www.math.toronto.edu/jkawach}

\author[S. Todorcevic]{Stevo Todorcevic}
\thanks{The second author is partially supported by grants from NSERC (455916) and CNRS (IMJ-PRGUMR7586).}
\address{Department of Mathematics\\ University of Toronto\\ Toronto, Canada, M5S 2E4.}
\address{Institut de Math\'ematique de Jussieu \\ UMR 7586, Case 247 \\ 4 place Jussieu, 75252 Paris Cedex, France}
\email{stevo@math.toronto.edu, todorcevic@math.jussieu.fr}

\subjclass[2010]{05D10, 03E05, 03E10.}
\keywords{Topological Ramsey spaces, infinite-dimensional Ramsey theory, dual Ramsey theory, Hales-Jewett theorem}

\begin{abstract}
We define a collection of topological Ramsey spaces consisting of equivalence relations on $\omega$ with the property that the minimal representatives of the equivalence classes alternate according to a fixed partition of $\omega$. To prove the associated pigeonhole principles, we make use of the left-variable Hales-Jewett theorem and its extension to an infinite alphabet. We also show how to transfer the corresponding infinite-dimensional Ramsey results to equivalence relations on countable limit ordinals (up to a necessary restriction on the set of minimal representatives of the equivalence classes) in order to obtain a dual Ramsey theorem for such ordinals.
\end{abstract}

\maketitle

\section{Introduction}
Recall that the infinite Ramsey theorem \cite{R} says that for every $k < \omega$ and every finite colouring of the collection of $k$-element subsets of $\omega$, there is an infinite $X \subseteq \omega$ such that the colouring is constant on the $k$-element subsets of $X$. To obtain an analogous result for finite colourings of the collection $[\omega]^\omega$ of all infinite subsets of $\omega$, one needs to impose a topological restriction on the fibres of the colouring, where $[\omega]^\omega$ is equipped with its standard metrizable topology. Galvin and Prikry \cite{GP} showed that Borel colourings of $[\omega]^\omega$ have the desired property; Silver \cite{S} then extended this to analytic colourings of $[\omega]^\omega$.

In \cite{CS1} Carlson and Simpson prove the following ``dual'' version of the infinite Ramsey theorem, which involves colouring partitions of $\omega$ instead of subsets and which can be seen as an infinite version of the finite dual Ramsey theorem of Graham and Rothschild \cite{GR}. To state it, we establish some notation: Given $k <  \omega$, let $\E_k$ be the set of all equivalence relations on $\omega$ with exactly $k$-many equivalence classes. We also let $\E_\infty$ be the set of all equivalence relations on $\omega$ with infinitely many equivalence classes. Given $X \in \E_\infty$ and $\alpha \in \omega \cup \{\infty\}$, we write $\E_\alpha \restriction X$ for the set of all $Y \in \E_\alpha$ which are \emph{coarsenings} of $X$ in the sense that each equivalence class of $Y$ is a union of equivalence classes of $X$. Carlson and Simpson proved the following result for Borel colourings, but we state it in its stronger form as found in \cite[Chapter 5.6]{T}.
 
\begin{thm}[Dual Ramsey theorem]
For every finite Souslin measurable colouring $c$ of $\E_k$, there is $X \in \E_\infty$ such that $c$ is constant on $\E_k \restriction X$.
\end{thm}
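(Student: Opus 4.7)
The plan is to realize $\E_\infty$ as an abstract topological Ramsey space in the sense of Todorcevic and then deduce the statement for $\E_k$ from the resulting infinite-dimensional Ramsey theorem.

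First I would describe the Ellentuck-style topology on $\E_\infty$ whose basic open sets have the form $[p, X] = \{Y \in \E_\infty \restriction X : p \subseteq Y\}$, where $p$ is a finite approximation (an equivalence relation on an initial segment of $\omega$ whose last element sits in its own singleton class, say) and $X \in \E_\infty$ end-extends $p$. The approximation map $r_n$ sends $X \in \E_\infty$ to the equivalence relation induced by restricting $X$ to the smallest initial segment of $\omega$ containing the first $n$ minimal representatives of its classes. I would then verify axioms (A.1)--(A.3) of an abstract topological Ramsey space, which amount to coherence of the $r_n$'s together with a routine amalgamation statement saying that any approximation $p$ can be realized inside any $X \in \E_\infty$ end-extending it.

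The substance of the argument is axiom (A.4), the pigeonhole principle: given a finite approximation $p$, some $X \in \E_\infty$ end-extending $p$, and a finite colouring of the one-step extensions of $p$ lying inside $X$, there must exist $Y \in \E_\infty \restriction X$ end-extending $p$ such that all one-step extensions of $p$ inside $Y$ receive the same colour. I expect this to be the main obstacle. A one-step extension of $p$ adjoins one new equivalence class with a specified minimum and then places each subsequent element (up to some bound) into either an existing class of $p$ or the new class; I would encode such data as a left-variable word over the finite alphabet indexing the classes of $p$, with the distinguished variable marking membership in the new class, and then invoke the left-variable Hales--Jewett theorem to produce a monochromatic combinatorial line. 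This line translates back into a coarsening $Y$ witnessing (A.4).

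Once axioms (A.1)--(A.4) are verified, the Abstract Ramsey Theorem for topological Ramsey spaces implies that every Souslin measurable subset of $\E_\infty$ is Ramsey. To deduce the statement for $\E_k$, I would lift a Souslin measurable colouring $c : \E_k \to \{0, 1, \ldots, r-1\}$ to a Souslin measurable colouring $\tilde c : \E_\infty \to \{0, 1, \ldots, r-1\}$ by $\tilde c(X) = c(\tau_k X)$, where $\tau_k X \in \E_k$ is the coarsening of $X$ that keeps the first $k-1$ classes of $X$ intact and merges all later classes into a single new class. Applying the infinite-dimensional theorem to $\tilde c$ yields some $X \in \E_\infty$ on which $\tilde c$ is constant. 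A standard transfer argument — combined, if necessary, with a further refinement ensuring that every $Y \in \E_k \restriction X$ arises as $\tau_k Z$ for some $Z \in \E_\infty \restriction X$ — then converts this into constancy of $c$ on $\E_k \restriction X$.
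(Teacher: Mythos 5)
Your proposal is correct and is essentially the Carlson--Simpson/Todorcevic argument that the paper cites for this theorem (and reuses as the template for Theorems \ref{thm1}, \ref{thm2} and \ref{thm6}): realize $\E_\infty$ as a closed topological Ramsey space whose pigeonhole A.4 is supplied by the left-variable Hales--Jewett theorem, then pull a colouring of $\E_k$ back along the continuous projection that collapses all classes past the first $k$ and recover surjectivity of that projection onto $\E_k \restriction X$ by one further coarsening in which every class contains infinitely many old classes. The one point to tighten is that A.4 needs not a single monochromatic combinatorial line but the full monochromatic translate ${w_0}^\frown [X]_L$ of the partial subsemigroup generated by an \emph{infinite} sequence $X=(x_n)_{n<\omega}$ of left-variable words, since the coarsening $Y$ must simultaneously control one-step extensions of every length; this is exactly what the infinitary left-variable Hales--Jewett theorem stated in Section 2 provides.
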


The topology on the space of equivalence relations and the notion of Souslin measurability will be defined in the next few sections.

Carlson and Simpson also proved a dual version of the Galvin-Prikry theorem. As above, we state the following result in its optimal form from \cite{T}. 

\begin{thm}[Dual Silver theorem]
For every finite Souslin measurable colouring $c$ of $\E_\infty$, there is $X \in \E_\infty$ such that $c$ is constant on $\E_\infty \restriction X$.
\end{thm}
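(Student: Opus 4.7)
My plan is to derive the Dual Silver theorem as the infinite-dimensional Ramsey property of a topological Ramsey space structure on $\E_\infty$. Order $\E_\infty$ by coarsening: $Y \leq X$ iff each class of $Y$ is a union of classes of $X$; in this order the partition of $\omega$ into singletons is the maximum element, playing the role of $\omega$ itself in the classical Galvin--Prikry--Silver setting. For $X \in \E_\infty$ let $r_n(X)$ denote the restriction of $X$ to the union of its first $n$ equivalence classes (indexed by the increasing sequence of minimal representatives), and declare the basic neighbourhoods to be the Ellentuck-style sets $[a,X] = \{Y \leq X : r_{|a|}(Y) = a\}$. This presents $\E_\infty$ as a candidate topological Ramsey space in the sense of \cite[Ch.~5]{T}.

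I would then verify the four axioms of a topological Ramsey space. The three ``bookkeeping'' axioms (sequencing, finitization, amalgamation) reduce to standard facts about how coarsenings of coarsenings behave and how finite approximations interact with the coarsening order. The substantive step is the pigeonhole principle: for every approximation $a$ of length $n$, every $X \in \E_\infty$ with $a \sqsubseteq X$, and every finite colouring of the one-step extensions of $a$ realised inside $X$, one must find $Y \leq X$ with $a \sqsubseteq Y$ on which the colouring is constant. The natural route, as hinted by the abstract, is to code one-step extensions as variable words by letting a variable symbol $v$ mark the positions absorbed into the newly created equivalence class, and then invoke the infinite left-variable Hales--Jewett theorem; this Carlson--Simpson pigeonhole carries essentially all of the combinatorial depth of the argument.

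Once the axioms are in place, Todorcevic's abstract Ramsey theorem for topological Ramsey spaces gives that every Souslin measurable subset $A \subseteq \E_\infty$ is Ramsey: for every non-empty $[a,X]$ there is $Y \in [a,X]$ with $[a,Y] \subseteq A$ or $[a,Y] \cap A = \emptyset$. Given a Souslin measurable colouring $c : \E_\infty \to k$, the preimages $c^{-1}(i)$ are Souslin measurable and cover $\E_\infty$; starting from the empty approximation and $X_0$ equal to the partition into singletons, one applies the Ramsey property successively for $i = 0, 1, \ldots, k-1$ to produce a decreasing chain $X_0 \geq X_1 \geq \cdots \geq X_{k-1}$. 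Since the $c^{-1}(i)$ cover $\E_\infty$, some colour $i$ must satisfy $\E_\infty \restriction X_{k-1} \subseteq c^{-1}(i)$, yielding the monochromatic $X = X_{k-1}$. The main obstacle throughout is the pigeonhole: extracting the correct word-coding so that coarsenings of $X$ translate faithfully to combinatorial lines for Hales--Jewett is delicate, particularly keeping track of which classes of $X$ are absorbed at each step while ensuring that the resulting $Y$ still lies in $\E_\infty$.
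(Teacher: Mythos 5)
Your architecture is the right one, and it is in fact the method this paper uses for its generalizations: the statement you are proving is quoted from Carlson--Simpson and Todorcevic rather than proved here, but the proof of Theorem \ref{thm1} (the $\P$-alternating space, which contains the classical space as the case $l=1$, $\P=\{\omega\}$) follows exactly your route --- realize the equivalence relations as a closed triple $(\R,\leq,r)$, check A.1--A.4 with A.4 supplied by the left-variable Hales--Jewett theorem via a word coding, and then apply the Abstract Ellentuck Theorem to Souslin measurable sets. Two concrete gaps remain. First, your finitization is wrong as stated: $r_n(X)$ must be the restriction of $X$ to the \emph{initial segment} $\{0,\dots,p_n(X)-1\}$ of $\omega$ determined by the $n^{\mathrm{th}}$ minimal representative, not to the union of the first $n$ equivalence classes. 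The latter is in general an infinite set, so your ``approximations'' are not finite objects, A.2(1) fails for any reasonable $\leq_{\text{fin}}$ built on equality of domains, and the depth function needed for A.3 breaks down, since for $A\leq B$ the union of the first $n$ classes of $A$ need not equal the union of the first $m$ classes of $B$ for any $m$. With initial segments everything works because $A\leq B$ implies $p(A)\subseteq p(B)$, so $\dom(r_n(A))=p_n(A)=p_m(B)=\dom(r_m(B))$ for a suitable $m$.

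Second, you have only named the pigeonhole, not proved it, and this is where all the work lies. What has to be checked is that a one-step end-extension $b\in r_{n+1}[a,E]$ is coded by a word $w^b$ over the alphabet $\{0,\dots,n\}$ (position $i$ records which of the classes $p_0(E),\dots,p_n(E)$ absorbs the class $p_{n+1+i}(E)$), that a homogeneous translate ${w_0}^\frown[X]_L$ produced by the left-variable Hales--Jewett theorem assembles into a single infinite word $y={u_0}^\frown{y_0}^\frown{y_1}^\frown\cdots$ defining one coarsening $F\leq E$ with $a=r_n(F)$, that the left-variable condition is precisely what guarantees each block $y_i$ opens a new class of $F$ (so that $F$ again has infinitely many classes), and that every $b\in r_{n+1}[a,F]$ arises as $b(w)$ for some $w$ in the homogeneous translate. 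None of this is automatic, and it is exactly the content of the proof of Theorem \ref{thm1}; your sketch correctly identifies the tool but does not carry out the verification.
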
 

The main goal of this paper is to obtain similar results for any countable limit ordinal in place of $\omega$, up to a restriction on the placement of the minimal representatives of each equivalence relation. To do so, we use topological Ramsey spaces and code equivalence relations on a countable limit ordinal via certain ``alternating'' equivalence relations on $\omega$. This general machinery is then used to obtain the following versions of the dual Silver theorem, which correspond to Corollary \ref{maincor1} and Corollary \ref{maincor2}, respectively. (The definitions of the spaces of equivalence relations and their associated topologies will be given later.)

\begin{thm}
For every $l \geq 1$ there is an optimal collection $\E_\infty(\omega \cdot l)$ of equivalence relations on $\omega \cdot l$ with infinitely many equivalence classes such that the following holds: For every finite Souslin measurable colouring of $\E_\infty(\omega \cdot l)$ and every $E \in \E_\infty(\omega \cdot l)$, there is a coarsening $F \in \E_\infty(\omega \cdot l)$ of $E$ such that the family $\E_\infty(\omega \cdot l)\restriction F$ of all coarsenings of $F$ is monochromatic.
\end{thm}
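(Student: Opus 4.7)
The plan is to construct a topological Ramsey space $(\E_\infty(\omega \cdot l), \leq, r)$ whose maximal elements are precisely the equivalence relations targeted by the theorem, and then to invoke the abstract Ramsey theorem for such spaces as developed in \cite{T}. I would proceed in four stages.

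First, fix an identification of $\omega \cdot l$ with $\omega$ that distributes the $l$ copies of $\omega$ among a partition $\omega = I_0 \sqcup \cdots \sqcup I_{l-1}$ into infinite blocks. Define $\E_\infty(\omega \cdot l)$ to consist of those equivalence relations on $\omega \cdot l$ whose pull-back to $\omega$ is \emph{alternating}, in the sense that the minimal representatives of its classes, listed in increasing order, cycle through $I_0, I_1, \ldots, I_{l-1}, I_0, I_1, \ldots$. This is the ``optimal'' restriction referred to in the statement: without some such condition, the space fails to be closed under the coarsenings one wants to consider, as there would be no canonical way to match a coarsening on $\omega \cdot l$ with a coarsening on $\omega$ preserving ordinal structure.

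Second, equip $\E_\infty(\omega \cdot l)$ with the coarsening relation $F \leq E$ (every class of $F$ is a union of classes of $E$) and with approximations $r_n(E)$ given by restriction to the first $n$ equivalence classes. I would verify axioms A.1--A.3 of topological Ramsey spaces by direct constructions, largely parallel to the treatment of the Carlson--Simpson space in \cite[Ch.~5]{T}. These reduce to checking that finite approximations are finite, that restrictions extend canonically, and that the requisite amalgamation property holds -- all combinatorial bookkeeping adapted to the alternation constraint.

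Third, and this is the heart of the matter, I would establish the pigeonhole principle A.4: for every $E \in \E_\infty(\omega \cdot l)$, every $s = r_n(E)$, and every finite coloring of the one-step extensions of $s$ lying below $E$, there is a coarsening $F \leq E$ extending $s$ on which the coloring is constant on the one-step extensions below $F$. The plan is to encode these one-step extensions as left-variable words over a countable alphabet, whose letters are stratified according to the blocks $I_0, \ldots, I_{l-1}$, and then apply the infinite-alphabet extension of the left-variable Hales--Jewett theorem. The alternating condition then selects which block of variables is active at each stage, and a block-by-block iteration of the pigeonhole yields A.4. Once A.4 is in place, the abstract Ramsey theorem immediately gives the conclusion: every finite Souslin measurable coloring of $\E_\infty(\omega \cdot l)$ is constant on $\E_\infty(\omega \cdot l) \restriction F$ for some coarsening $F$ of the given $E$.

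The principal difficulty lies in step three. One must set up the word encoding so that coarsenings of an alternating equivalence relation correspond precisely to legal variable substitutions, and so that the cyclic $(I_0, \ldots, I_{l-1})$ pattern is preserved under substitution; the infinite alphabet in the Hales--Jewett input is indispensable because the number of equivalence classes at play is infinite. Reconciling these two requirements -- the combinatorial Ramsey engine and the ordinal bookkeeping -- is where the real work of the paper should go.
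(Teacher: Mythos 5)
Your overall architecture --- realize $\E_\infty(\omega\cdot l)$ as the image of a topological Ramsey space of alternating equivalence relations on $\omega$, prove A.4 by coding one-step extensions as words over an alphabet stratified by the blocks and applying a left-variable Hales--Jewett theorem, then finish with the Abstract Ellentuck Theorem --- is the paper's approach. But there is a genuine gap in the transfer step. Alternation of the minimal representatives alone does not make the passage to $\omega\cdot l$ work: the bijection $\phi:\omega\to\omega\cdot l$ induced by the partition is not order-preserving, so the minimal element of a class in the $\omega$-ordering need not map to the minimal element of its image in the ordinal ordering of $\omega\cdot l$. Hence $\phi$ need not carry $p(E)$ to $p(\Phi(E))$, and the desired statement about coarsenings arising from rigid surjections of $\omega\cdot l$ does not follow from the Ramsey theorem for the merely alternating space. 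The paper repairs this with an additional requirement: every class whose minimal representative lies in $P_n$ must be contained in $\bigcup_{m\geq n}P_m$ (equivalently, on the $\omega\cdot l$ side, the ordinal-minimal and lexicographically-minimal representatives must coincide). This extra condition changes the pigeonhole argument, because not every letter of the alphabet now codes a legal joining; the word-to-extension map $w\mapsto b(w)$ must be composed with a projection $w\mapsto\widetilde{w}$ sending illegal joinings to a default class, and one must check that the Hales--Jewett homogeneous set survives this projection. Your assertion that ``coarsenings correspond precisely to legal variable substitutions'' is exactly the point that fails without this work.

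A secondary issue: your claim that the infinite-alphabet Hales--Jewett theorem is indispensable here ``because the number of equivalence classes at play is infinite'' is a miscount. A one-step extension of an approximation with $n$ classes joins each new class to one of only $n+1$ old classes, so the relevant alphabet is $(n+1)^l$, which is finite; the ordinary finite-alphabet left-variable Hales--Jewett theorem suffices for $\omega\cdot l$. The infinite-alphabet version is needed only when the partition of $\omega$ is itself infinite, i.e.\ for the ordinals $\alpha\geq\omega^2$ treated later in the paper. This is not by itself fatal, since the infinite-alphabet statement subsumes the finite one, but it signals that the encoding you have in mind is not yet the right one.
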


\begin{thm}
Let $\alpha \geq \omega^2$ be a countable limit ordinal and suppose $\beta$ is an ordinal such that $\alpha = \omega \cdot \beta$. Then for every bijection $f : \omega \rightarrow \beta$ there is an optimal collection $\E^f_\infty(\alpha)$ of equivalence relations on $\alpha$ with infinitely many equivalence classes such that the following holds: For every finite Souslin measurable colouring of $\E^f_\infty(\alpha)$ and every $E \in \E^f_\infty(\alpha)$, there is a coarsening $F \in \E^f_\infty(\alpha)$ of $E$ such that the family $\E^f_\infty(\alpha)\restriction F$ of all coarsenings of $F$ is monochromatic.
\end{thm}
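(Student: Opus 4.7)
My plan is to reduce the case $\alpha = \omega \cdot \beta$, with $\beta$ an arbitrary countably infinite ordinal, to the $\omega$-version of the same statement by using $f$ to code equivalence relations on $\alpha$ as alternating equivalence relations on $\omega$. The point is that $f$ enumerates as an $\omega$-sequence the $\beta$-many order-type-$\omega$ blocks of $\alpha$, after which the topological Ramsey space $\AE_\infty$ of alternating equivalence relations on $\omega$ (developed earlier in the paper via the left-variable Hales--Jewett theorem and its extension to an infinite alphabet) already encodes exactly the elements of $\E^f_\infty(\alpha)$.

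First I would fix a bijection $\omega \to \omega \times \omega$ splitting $\omega$ into infinite blocks $(C_n)_{n<\omega}$ and define a bijection $\Phi_f : \omega \to \alpha$ sending $C_n$ order-preservingly onto $B^f_n := [\omega \cdot f(n), \omega \cdot f(n) + \omega)$. Then I would set $\E^f_\infty(\alpha) := \Phi_f[\AE_\infty]$, so that the restriction on the placement of minimal representatives translates, on the $\alpha$-side, to the condition that these minima alternate in the $\Phi_f$-order (which for $\beta \geq \omega$ need not agree with the natural ordinal order on $\alpha$). This is the ``necessary restriction'' referred to in the abstract.

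Next I would verify that $\Phi_f$ is a homeomorphism from $\AE_\infty$, equipped with the metric Ellentuck topology used for the Ramsey space structure, onto $\E^f_\infty(\alpha)$ with the inherited topology. Under this homeomorphism, coarsenings correspond to coarsenings and finite approximations to finite approximations, so Souslin measurability is preserved. Given a Souslin measurable colouring $c : \E^f_\infty(\alpha) \to \{1, \dots, r\}$ and some $E \in \E^f_\infty(\alpha)$, I would set $\tilde c := c \circ \Phi_f$ and $\tilde E := \Phi_f^{-1}(E)$, and apply the dual Silver theorem for $\AE_\infty$ (established earlier in the paper as one of the main technical results) to extract a coarsening $\tilde F$ of $\tilde E$ on which $\tilde c$ is constant throughout $\AE_\infty \restriction \tilde F$. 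Then $F := \Phi_f(\tilde F)$ is the required coarsening.

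The main obstacle, in my view, lies not in the Ramsey step — which is handled entirely by the already-developed topological Ramsey space machinery — but in setting up the coding $\Phi_f$ carefully enough that the passage between the coarsening relations on the two sides is genuinely functorial, and in justifying the claimed optimality of $\E^f_\infty(\alpha)$. The delicate point is that the ordinal order on $\alpha$ is not the order induced by $\Phi_f$, so coarsenings of $E \in \E^f_\infty(\alpha)$ must be computed with respect to the block-enumeration given by $f$ rather than the natural ordinal order, and the alternation of minimal representatives is likewise measured with respect to $f$. This is what forces the dependence of $\E^f_\infty(\alpha)$ on the choice of $f$ and, conversely, explains why no $f$-free version of the theorem can be expected once $\beta \geq \omega$.
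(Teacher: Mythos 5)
Your overall strategy---use $f$ to build a bijection $\omega\to\alpha$ respecting the blocks, push the topological Ramsey space of alternating equivalence relations on $\omega$ forward to define $\E^f_\infty(\alpha)$, and pull colourings back---is exactly the paper's route, and the Ramsey step itself is handled correctly. But there is a genuine gap in how you set up the transferred space. You define $\E^f_\infty(\alpha)$ as the image of the plain alternating space and then declare that the minimal representatives on the $\alpha$-side are to be ``measured with respect to $f$'' rather than the ordinal order. With that convention the resulting statement is only a relabelling of the $\omega$-theorem: it says nothing about equivalence relations arising from rigid surjections $\alpha\to\alpha$, where minimal representatives must be minimal in the genuine ordinal order on $\alpha$. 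Concretely, if $E$ on $\omega$ is merely alternating, a class whose $\omega$-minimum lies in the block $C_n$ may contain points of a block $C_m$ with $f(m)<f(n)$; its image then has ordinal minimum inside $\omega\times\{f(m)\}$, so the $\omega$-minimum does not map to the ordinal minimum and the prescribed pattern of (ordinal) minimal representatives is destroyed. Coarsening is order-free, so that part of your correspondence is fine, but the identification $\phi(p(E))=p(\Phi(E))$ is exactly what fails.

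The paper repairs this by transferring not the full alternating space but the subspace $\E^{\P,\I}_\infty$ cut out by the extra condition that every $P_n$-class is contained in $\bigcup\{P_m : m\in I_n\}$ with $I_n=\{m : f(m)\ge f(n)\}$; this forces each class to live only in blocks whose ordinal index is at least that of the block containing its minimum, which is precisely what makes $\omega$-minima map to ordinal minima. The cost is that one must reprove the pigeonhole axiom A.4 for this restricted space: in the Hales--Jewett coding of end-extensions, a letter may propose joining a class to an earlier class that condition (c)$_\I$ forbids, and the paper handles this by redirecting all forbidden joinings to the class of $0$ (the words $\widetilde{w}$ and $\widetilde{y}$ in the proofs of Theorems \ref{thm2} and \ref{thm7}). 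This is a substantive additional argument, not a formality, and it is the missing idea in your proposal; without it the collection you construct is not the one the theorem is about, and its claimed optimality relative to the ordinal structure of $\alpha$ has no content.
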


This paper is organized as follows: Section 2 contains a brief overview of the theory of topological Ramsey spaces together with the statement of the left-variable Hales-Jewett theorem, which is the main pigeonhole principle that will be needed later. In Section 3, we extend the dual Ramsey and dual Silver theorems to the ordinals $\omega \cdot l$ by considering topological Ramsey spaces of equivalence relations on $\omega$ which alternate (in a sense that will be made precise later) with respect to a finite partition of $\omega$. In Section 4, we obtain a version of the left-variable Hales-Jewett theorem for infinite alphabets. We then use this pigeonhole principle in Section 5 to prove dual Ramsey and dual Silver theorems for any countable limit ordinal $\alpha \geq \omega^2$ by developing topological Ramsey spaces of equivalence relations which alternate over an infinite partition of $\omega$.

\subsection*{Acknowledgements} We are grateful to Jordi L\'opez-Abad for many useful conversations on the subject matter and for comments on an earlier version of this paper. We also thank the referee for many helpful comments and suggestions.

\section{Preliminaries}

\subsection{Topological Ramsey spaces}

The exposition in this subsection closely follows that of \cite[Chapter 5]{T} and we refer the reader there for more information. Let $\R$ be a set which we will think of as a collection of infinite sequences of objects. We equip $\R$ with a quasiorder $\leq$ and a \emph{restriction map} $$r : \R \times \omega \rightarrow \AR$$ where $$\AR = \bigcup_{n <\omega} \AR_n$$ and where $\AR_n$ (which we think of as the collection of $n^{\rm{th}}$ approximations to elements of $\R$) is the range of $r_n = r\restriction{\R \times \{n\}}$. We will be interested in structures $$(\R,\leq, r)$$ of the above form which moreover satisfy the following four axioms. The relevant notation will be explained after the statement of the axioms.

\begin{itemize}
	\item[\textbf{A.1.}] For all $A, B \in \R$ we have:
		\begin{enumerate}
			\item $r_0(A) = \emptyset$.
			\item $A \neq B$ implies $r_n(A) \neq r_n(B)$ for some $n$.
			\item $r_n(A) = r_m(B)$ implies $n = m$ and $r_k(A) = r_k(B)$ for all $k < n$.
		\end{enumerate}
	\item[\textbf{A.2.}] There is a quasiorder $\leq_{\text{fin}}$ on $\AR$ such that:
		\begin{enumerate}
			\item $\{a \in \AR: a \leq_{\text{fin}} b\}$ is finite for all $b \in \AR$.
			\item For all $A, B \in \R$ we have $$A \leq B \text{ iff } (\forall n) (\exists m) \, r_n(A) \leq_{\text{fin}} r_m(B).$$
			\item For all $a, b \in \AR$, $$a \sqsubseteq b \wedge b \leq_{\text{fin}} c \rightarrow (\exists d \sqsubseteq c) \, a \leq_{\text{fin}} d.$$
		\end{enumerate}
	\item[\textbf{A.3.}] Let $A, B \in \R$ and $a \in \AR$ be such that $d = \depth_B(a) < \infty$. Then:
		\begin{enumerate}
		\item $[a,A'] \neq \emptyset$ for all $A' \in [r_d(B),B]$.
		\item If $A \leq B$ and $[a,A] \neq \emptyset$, then there is $A' \in [r_d(B), B]$ such that $[a,A'] \subseteq [a,A]$.
		\end{enumerate}
	\item[\textbf{A.4.}] If $[a,B] \neq \emptyset$ and $d = \depth_B(a) < \infty$, then for every $\O \subseteq \AR_{|a|+1}$ there exists $A \in [r_d(B), B]$ such that $r_{|a|+1}[a, A] \subseteq \O$ or $r_{|a|+1}[a, A] \subseteq \O^c$.
\end{itemize}

Above, we use the notation $|a|$ to denote the \emph{length} of $a \in \AR$, which is the unique integer $n$ such that $a = r_n(A)$ for some $A \in \R$. We also write $a \sqsubseteq b$ whenever there is $B \in \R$ and $n \leq m < \omega$ such that $a = r_n(B)$ and $b = r_m(B)$. For $a \in \AR$ and $B \in \R$, we define $$[a, B] = \{A \in \R : A \leq B \text{ and } (\exists n) \,  r_n(A) = a\}.$$ Also recall that the \emph{depth} of $a$ in $B$, for $a \in \AR$ and $B \in \R$, is defined by
\[ \depth_B(a) = \begin{cases}
	\min\{n < \omega : a \leqc_{\text{fin}} r_n(B)\} & \text{ if $(\exists n) \, a \leqc_{\text{fin}} r_n(B)$},\\
	\infty & \text{ otherwise}.
	\end{cases}
\]

We equip $\AR$ with the discrete topology, $\AR^\omega$ with the corresponding product topology, and $\R$ with the subspace topology from $\AR^\omega$. We say $(\R,\leq,r)$ is \emph{closed} if $\R$ is closed when viewed as a subspace of $\AR^\omega$. The \emph{Ellentuck topology} on $\R$ is the topology generated by basic open sets of the form $[a, B]$ for $a \in \AR, B \in \R$. Note that this topology refines the metrizable topology on $\R$ when considered as a subspace of $\AR^\omega$.

\begin{defi} Let $\X \subseteq \R$.
\begin{enumerate}[(i)]
\item $\X$ has the \emph{property of Baire} if $\X = \O \triangle M$ for some Ellentuck open $\O\subseteq \R$ and some Ellentuck meagre $M \subseteq \R$.
\item $\X$ is \emph{Ramsey} if for every non-empty basic set $[a, B]$ there is $A \in [a,B]$ such that $[a,A] \subseteq \X$ or $[a,A] \cap \X = \emptyset$. If the second alternative always holds, then $\X$ is \emph{Ramsey null}.
\end{enumerate}
\end{defi}

\begin{thm}[Abstract Ellentuck Theorem]
Suppose $(\R, \leq, r)$ is closed and satisfies axioms A.1 to A.4. Then $\X \subseteq \R$ has the property of Baire if and only if it is Ramsey. Furthermore, $\X \subseteq \R$ is Ellentuck meagre if and only if it is Ramsey null.
\end{thm}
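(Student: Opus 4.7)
The plan is to establish the theorem by a combinatorial forcing argument in the style of Galvin and Prikry, adapted to this abstract setting. The central notion is that of a condition $B \in \R$ \emph{accepting} a finite approximation $a \in \AR$ (meaning $[a,B] \subseteq \X$), \emph{rejecting} $a$ (meaning no $C \in [a,B]$ accepts $a$), and \emph{deciding} $a$ if it either accepts or rejects. The rough strategy is to show that every non-empty basic neighbourhood $[a,B]$ can be refined to some $[a,A]$ on which $\X$ is decided uniformly; from this, both statements of the theorem follow by fairly standard descriptive set-theoretic arguments.

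The key technical step, which I would prove first, is an \emph{amalgamation lemma}: given $[a,B] \neq \emptyset$ with $d = \depth_B(a) < \infty$, there is $A \in [r_d(B), B]$ such that $A$ decides every $b \in r_{|a|+1}[a,A]$. Axiom A.4 is precisely what is needed here: applied to the set $\O \subseteq \AR_{|a|+1}$ of ``accepted'' one-step extensions of $a$, it supplies a condition that is homogeneous over all immediate successors of $a$ available inside $[a,A]$, and A.3 is used to glue these local choices with the ambient structure. Iterating this lemma on all depths yields a fusion argument producing a single $A \leq B$ that decides $\X$ on every $b \sqsupseteq a$ appearing in $[a,A]$; the hypothesis that $\R$ is closed in $\AR^\omega$ guarantees the fusion converges to a genuine element of $\R$. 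A straightforward induction on $|b|$ then shows that if $a$ is rejected by $A$ then every such $b$ is also rejected, so $[a,A] \cap \X = \emptyset$; otherwise $[a,A] \subseteq \X$. This proves every basic Ellentuck open set, and hence by a further iteration every Ellentuck open set, is Ramsey with respect to $\X$.

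To pass from this to the full statement, I would next establish that the Ramsey null sets form a $\sigma$-ideal which contains every Ellentuck nowhere dense set (the latter because a nowhere dense set contains no basic neighbourhood, so the rejecting alternative of the amalgamation lemma always applies), and symmetrically that the Ramsey sets form a $\sigma$-algebra containing the Ellentuck open sets. Given this, every set with the property of Baire is a symmetric difference of an Ellentuck open Ramsey set with a Ramsey null set, hence itself Ramsey; conversely the two alternatives in the definition of ``Ramsey'' directly produce a representation as an Ellentuck open set modulo an Ellentuck meagre set. For the last clause, Ramsey null sets are Ellentuck meagre by a density argument (the ``avoiding'' sets are dense and open), and Ellentuck meagre sets are Ramsey null by $\sigma$-additivity together with the nowhere dense case already handled.

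The main obstacle is the amalgamation/fusion lemma above: one must ensure that the inductive construction of $A$ converges inside $\R$ and that the hypotheses of A.4 remain satisfied at each stage relative to the current approximation. The subtle point is that one can only invoke A.4 inside the interval $[r_d(B), B]$ with $d = \depth_B(a)$, so the construction must carefully track how the depths of successive approximations evolve as one refines, using A.2(3) to propagate $\leq_{\text{fin}}$-relations along extensions and A.3 to reach back into the required intervals. This bookkeeping is precisely what axioms A.1--A.3 are engineered to handle, and verifying that the stages of the fusion are genuinely elements of the intended intervals is the most delicate part of the proof.
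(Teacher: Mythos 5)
The paper does not prove this theorem: it is imported verbatim as background from Chapter 5 of Todorcevic's \emph{Introduction to Ramsey Spaces}, which is exactly the source your outline reconstructs. Your proposal --- combinatorial forcing with accept/reject/decide, A.4 applied to the set of accepted one-step extensions, a fusion through the intervals $[r_d(B),B]$ using closedness of $\R$ in $\AR^\omega$, then the $\sigma$-ideal of Ramsey null sets absorbing the nowhere dense sets to pass from open sets to sets with the Baire property --- is the standard and correct proof strategy for this statement, so there is nothing to fault relative to the paper.
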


A structure of the above form which satisfies the conclusion of the Abstract Ellentuck Theorem is called a \emph{topological Ramsey space}.

Recall that a \emph{Souslin scheme} is a family of subsets $(X_s)_{s \in \omega^{< \omega}}$ of some underlying set which is indexed by finite sequences of non-negative integers. The \emph{Souslin operation} turns a Souslin scheme $(X_s)_{s \in \omega^{< \omega}}$ into the set $$\bigcup_{x \in \mathcal{N}} \bigcap_{n < \omega} X_{x \restriction n}$$ where $\mathcal{N}$ denotes the Baire space, i.e. the set of all infinite sequences in $\omega$. Then $\X \subseteq \R$ is \emph{Souslin measurable} if it belongs to the minimal field of subsets of $\R$ which contains all Ellentuck open sets and is closed under the Souslin operation. In particular, every analytic or coanalytic subset of $\R$ is Souslin measurable. Finally, we say finite colouring $c : \R \rightarrow n$ is Souslin measurable if each set $c^{-1}\{i\}, \, i < n$ is Souslin measurable.

We make note of the following useful consequence of the Abstract Ellentuck Theorem. From now on, all topological notions will refer to the Ellentuck topology on $\R$.

\begin{cor}
Suppose $(\R, \leq, r)$ is closed and satisfies axioms A.1 to A.4. Then for every finite Souslin measurable colouring of $\R$ and every $B \in \R$, there is $A \leq B, A \in \R$ such that the set $\{A' \in \R : A' \leq A\}$ is monochromatic.
\end{cor}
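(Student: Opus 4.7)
The plan is to iterate the Ramsey property, applying it once per colour to find progressively finer elements of $\R$. First, I would observe that if $c : \R \to n$ is Souslin measurable then each preimage $c^{-1}\{i\}$ is Souslin measurable, hence has the property of Baire (the Souslin operation preserves the property of Baire). By the Abstract Ellentuck Theorem, each $c^{-1}\{i\}$ is therefore Ramsey.

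Next, I would set $B_{-1} = B$ and build a $\leq$-decreasing sequence $B_{-1} \geq B_0 \geq B_1 \geq \cdots \geq B_{n-1}$ in $\R$ as follows. Having produced $B_{i-1}$, apply the Ramsey property of $c^{-1}\{i\}$ to the basic open set $[\emptyset, B_{i-1}]$, which is nonempty by A.3(1) applied to $a = \emptyset$ (whose depth in $B_{i-1}$ is $0$). This yields $B_i \in [\emptyset, B_{i-1}]$ with either $[\emptyset, B_i] \subseteq c^{-1}\{i\}$ or $[\emptyset, B_i] \cap c^{-1}\{i\} = \emptyset$. If the first alternative occurs at some stage $i$, stop and set $A = B_i$; otherwise continue.

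The key observation is that by axiom A.1(1) we have $r_0(A') = \emptyset$ for every $A' \in \R$, so
\[ [\emptyset, A] = \{A' \in \R : A' \leq A\}. \]
Therefore, if the process terminates at stage $i$ with the first alternative, then $\{A' \in \R : A' \leq A\} \subseteq c^{-1}\{i\}$ and we are done. Suppose instead the second alternative occurs at every stage $0, 1, \dots, n-1$. Then $[\emptyset, B_{n-1}]$ is disjoint from every colour class $c^{-1}\{i\}$, which forces $[\emptyset, B_{n-1}] = \emptyset$. But $B_{n-1} \in [\emptyset, B_{n-1}]$ (again by A.1(1)), a contradiction; equivalently, this contradicts A.3(1). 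Hence the procedure must terminate successfully at some stage $i < n$, producing the desired $A \leq B$.

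No serious obstacle arises here: the entire argument is a standard finite induction on the number of colours, and the only technical points are the fact that Souslin measurable sets have the property of Baire (so the Abstract Ellentuck Theorem applies) and the observation that $[\emptyset, A]$ coincides with the $\leq$-downset of $A$ via axiom A.1(1).
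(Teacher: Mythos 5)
Your proof is correct and follows the standard derivation that the paper leaves implicit (the corollary is stated there without proof): Souslin measurable sets have the property of Baire because that class contains the open sets and is closed under the Souslin operation, the Abstract Ellentuck Theorem then makes each colour class Ramsey, and a finite induction over the colours applied to the basic set $[\emptyset, B] = \{A' \in \R : A' \leq B\}$ finishes the argument. The only step you leave tacit is that $[\emptyset, B_{n-1}] \subseteq [\emptyset, B_i]$ for each $i < n-1$ (transitivity of the quasiorder $\leq$), which is what lets you conclude that $[\emptyset, B_{n-1}]$ misses every colour class in the final contradiction.
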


\subsection{The left-variable Hales-Jewett theorem}

The \emph{Hales-Jewett theorem}, originally proved in \cite{HJ}, is a powerful combinatorial result which involves colouring finite words over an alphabet. We will be concerned with a particular infinitary version of the Hales-Jewett theorem which we now proceed to describe. Let $L$ be a finite set which we refer to as an \emph{alphabet}, and let $v$ be a symbol distinct from all members of $L$. We let $W_L$ denote the set of all \emph{words} formed from $L$, i.e. all functions $w : n \rightarrow L$ where $n < \omega$. Similarly, we let $W_{Lv}$ denote all \emph{variable words}, i.e. functions $x : n \rightarrow L \cup \{v\}$ where $n < \omega$ and $v \in \range(x)$. We will often write $|w|$ for the domain (or length) of the word $w$. For two words $x$ and $y$ we write $x^\frown y$ for the word obtained by concatenating $x$ and $y$. If $x$ is a variable word and $\lambda \in L \cup \{v\}$, we write $x[\lambda]$ for the element of $W_L$ or $W_{Lv}$ obtained by replacing every occurrence of the variable $v$ in $x$ by $\lambda$. Given a sequence $X = (x_n)_{n<\omega}$ of variable words over an alphabet $L$, we write $[X]_L$ for the \emph{partial subsemigroup of $W_L$ generated by $X$}, which consists of all words of the form $${x_{n_0}[\lambda_0]}^\frown \dots ^\frown{x_{n_k}}[\lambda_k]$$ where $k < \omega, n_0 < \dots < n_k < \omega$ and $\lambda_i \in L$ for each $i \leq k$. The partial subsemigroup $[X]_{Lv}$ of $W_{Lv}$ is defined similarly.

We will make use of the following version of the infinite Hales-Jewett theorem originally considered in \cite{CS1} and obtained in the form below in \cite{HM}; another proof can be found in \cite[Chapter 2]{T}. First, let us say that $x \in W_{Lv}$ is a \emph{left-variable word} if the first letter of $x$ is $v$.

\begin{thm}[Left-variable Hales-Jewett Theorem]
Suppose $L$ is a finite alphabet. Then for every finite colouring of $W_L$ there is a sequence $X = (x_n)_{n<\omega}$ of left-variable words together with a variable-free word $w_0$ such that the translate ${w_0}^\frown [X]_L$ is monochromatic.
\end{thm}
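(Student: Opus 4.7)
The plan is to use the method of idempotent ultrafilters in the Stone-Čech compactification, following the Galvin-Glazer strategy. Equip $W_L$ and $W_{Lv}$ with concatenation; both become semigroups, and the set $W^v_{Lv}$ of left-variable words is a subsemigroup of $W_{Lv}$ (concatenating two left-variable words still yields a word beginning with $v$). The Stone-Čech compactifications $\beta W_L$ and $\beta W_{Lv}$ then carry the structure of compact right-topological semigroups under the usual extension $p \cdot q = \{A : \{w : w^{-1} A \in q\} \in p\}$. For each letter $\lambda \in L$, the substitution map $T_\lambda : W_{Lv} \to W_L$ sending $x$ to $x[\lambda]$ is a semigroup homomorphism, so its continuous extension $\tilde T_\lambda : \beta W_{Lv} \to \beta W_L$ is a homomorphism as well.

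The next step is to find a simultaneous idempotent pair $(p,q)$. Consider
\[
S = \{(p,q) \in \overline{W^v_{Lv}} \times \beta W_L : \tilde T_\lambda(p) = q \text{ for every } \lambda \in L\},
\]
where $\overline{W^v_{Lv}} \subseteq \beta W_{Lv}$ is the closure of the set of left-variable words. Continuity of each $\tilde T_\lambda$ implies $S$ is closed, and the homomorphism property ensures $S$ is closed under the coordinatewise multiplication inherited from the factors; thus $S$ is a non-empty compact right-topological subsemigroup of $\beta W_{Lv} \times \beta W_L$. By the Ellis-Namioka theorem it contains an idempotent $(p,q)$. Here $q$ is an idempotent of $\beta W_L$ and $p$ is an idempotent concentrated on left-variable words satisfying $\tilde T_\lambda(p) = q$ for every $\lambda \in L$.

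Given a finite colouring $c : W_L \to k$, pick $i < k$ with $A := c^{-1}(i) \in q$. The desired $w_0$ and sequence $(x_n)_{n<\omega}$ are then produced by the standard Galvin-Glazer recursion, adapted to the pair $(p,q)$. Let $A^* = \{w \in A : w^{-1} A \in q\} \in q$ (by idempotence of $q$) and choose $w_0 \in A^*$. Having built $x_0, \ldots, x_{n-1}$ so that every concatenation $w_0^\frown x_{i_0}[\lambda_0]^\frown \cdots ^\frown x_{i_k}[\lambda_k]$ with $i_0 < \cdots < i_k < n$ and $\lambda_j \in L$ lies in $A^*$, form the finite intersection $B_n \in q$ of all the sets $(w_0^\frown x_{i_0}[\lambda_0]^\frown \cdots ^\frown x_{i_k}[\lambda_k])^{-1} A^*$ over such choices; then $\bigcap_{\lambda \in L} T_\lambda^{-1}(B_n) \in p$ because $\tilde T_\lambda(p) = q$, so this intersection contains some left-variable word $x_n$. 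A routine check propagates the induction and yields $w_0^\frown [X]_L \subseteq A$, as required.

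The main obstacle is securing the idempotent pair $(p,q)$ with the required substitution-invariance: both the closedness of $S$ under the product operation and the non-degeneracy of the resulting semigroup hinge on the homomorphism property of each $\tilde T_\lambda$, and one must verify that the closure $\overline{W^v_{Lv}}$ is itself a closed subsemigroup before applying Ellis-Namioka. Once $(p,q)$ is in hand, the extraction is a notationally heavy but routine Galvin-Glazer argument.
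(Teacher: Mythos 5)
There is a fatal gap at the very first step: the semigroup $S$ you define is \emph{empty} whenever $|L| \geq 2$. If $x$ is a left-variable word then its first letter is $v$, so the first letter of $x[\lambda]$ is $\lambda$. Hence for any ultrafilter $p$ in the closure of the set of left-variable words, the set $\{w \in W_L : w(0) = \lambda\}$ belongs to $\tilde T_\lambda(p)$; since these sets are pairwise disjoint for distinct $\lambda \in L$, the ultrafilters $\tilde T_\lambda(p)$ are pairwise distinct, and no single $q$ can equal all of them. So Ellis--Namioka has nothing to apply to. Even if you drop the left-variable requirement and work with the closure of all of $W_{Lv}$, the non-emptiness of the analogous set (an idempotent $p$ on variable words all of whose substitution images coincide) is precisely the combinatorial heart of the Hales--Jewett theorem; it cannot be obtained by the soft compactness argument you sketch and must be earned either from the finite Hales--Jewett theorem or from a minimality argument. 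Your proposal asserts it without proof, and the part you flag as ``routine'' (closure under the product) is indeed routine, while the part you do not address (non-degeneracy) is the whole theorem.

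The standard argument --- which the present paper follows when it proves the infinite-alphabet variant in Theorem \ref{thm5}, modelled on \cite[Theorem 2.37]{T} --- uses three ultrafilters rather than a pair. One takes a \emph{minimal} idempotent $\W$ concentrating on $W_L$, then an idempotent $\V \leq \W$ in the two-sided ideal of ultrafilters concentrating on $W_{Lv}$; minimality of $\W$ forces $\V[\lambda] = \W$ for all $\lambda \in L$, which is where the agreement of substitution images actually comes from. One then takes a further idempotent $\U$ in the right-ideal of ultrafilters concentrating on $v^\frown W_{Lv}$ (the left-variable words) with $\V^\frown \U = \V$ and $\U^\frown \V = \U$. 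Crucially, $\U[\lambda]$ is \emph{not} equal to $\W$ --- as the first-letter observation above shows it cannot be --- and one only has the weaker relations $\W^\frown \U[\lambda] = \W$ and $\U[\lambda]^\frown \W = \U[\lambda]$. This asymmetry is exactly why the conclusion involves a translate $w_0^\frown [X]_L$ rather than $[X]_L$ itself, and why the recursion must interleave variable-free words $w_n$ (chosen from derived sets $\partial Q = \{w \in Q : Q/w \in \W \text{ and } Q/w \in \U[\lambda] \text{ for all } \lambda\}$) between the left-variable words, absorbing them at the end via $x_n' = x_n^\frown w_{n+1}$. Your single-pair recursion has no mechanism for producing $w_0$ or these intermediate words, and without them the left-variable requirement cannot be met.
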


\section{$\P$-alternating equivalence relations for finite partitions of $\omega$}

Let $\P$ be a partition of $\omega$ into finitely many infinite sets $P_0, \dots, P_{l-1}$ such that $\min P_i < \min P_j$ whenever $i < j$. Let $\E^\P_\infty$ denote the set of all \emph{$\P$-alternating} equivalence relations on $\omega$, i.e. equivalence relations $E$ on $\omega$ such that:
\begin{enumerate}
	\item[(a)] $E$ has infinitely many equivalence classes.
	\item[(b)] If $(p_k(E))_{k<\omega}$ is an increasing enumeration of the set of minimal representatives of the equivalence classes of $E$, then $p_k(E) \in P_n$ where $n \equiv  k \mod l$.
\end{enumerate}
Given $E \in \E^\P_\infty$, a \emph{$P_n$-class} is an equivalence class $X$ of $E$ such that $\min X \in P_n$. Note that condition (b) implies that each $E \in \E^\P_\infty$ has infinitely many $P_n$-classes for each $n < l$.

Before proceeding, we establish some notation and terminology which will be used throughout the paper when working with equivalence relations. For any two equivalence relations $E$ and $F$ on the same set, write $E \leq F$ whenever every class of $E$ is a union of a classes of $F$. In this case we also say that $E$ is a \emph{coarsening} of $F$, or that $E$ is \emph{coarser} than $F$. For any equivalence relation $E$ on a well-ordered set, $p(E)$ will denote the set of all minimal representatives of the classes of $E$. The \emph{$n^{\rm{th}}$ approximation} of an equivalence relation $E$ on $\omega$ is given by $$r_n(E) = E \restriction p_n(E)$$ where $p_n(E)$ is the $n^{\mathrm{th}}$ element of $p(E)$ when the latter set is enumerated in increasing order. Let $\AE^\P_\infty$ denote the set of all finite approximations of elements of $\E^\P_\infty$. For $a \in \AE^\P_\infty$, let $|a|$ denote the \emph{length} of $a$, which is defined as the unique integer such that $a = r_n(E)$ for some $E$; equivalently, $|a|$ is the number of equivalence classes of $a$. $(\AE^\P_\infty)_n$ will denote all approximations of length $n$. The \emph{domain} of an approximation $a$ is the set $$\dom(a) = \{0, 1, \dots, p_{|a|}(E) - 1\} = p_{|a|}(E)$$ where $E$ is such that $a = r_n(E)$. The relation $\leq$ admits a finitization obtained by setting $a \leq_\text{fin} b$ if and only if $\dom(a) = \dom(b)$ and $a$ is coarser than $b$.

\begin{thm}\label{thm1}
$(\E^\P_\infty, \leq, r)$ is a topological Ramsey space.
\end{thm}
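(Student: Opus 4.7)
The plan is to verify the closedness of $\E^\P_\infty$ inside $(\AE^\P_\infty)^\omega$ together with axioms A.1--A.4. Closedness and A.1 are immediate from the definition of the restriction maps and the fact that any $E \in \E^\P_\infty$ is determined by its sequence of approximations. For A.2, the finitization $a \leq_\text{fin} b$ supplied in the excerpt has manifestly finite downsets, and the equivalence between $A \leq B$ and $(\forall n)(\exists m)\, r_n(A) \leq_\text{fin} r_m(B)$ follows from $A \leq B \Rightarrow p(A) \subseteq p(B)$, which supplies the required $m$ at each $n$; coherence A.2(3) is direct.

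For A.3, fix $A, B \in \E^\P_\infty$ and $a \in \AE^\P_\infty$ with $d = \depth_B(a) < \infty$. A useful preliminary observation is that the identity $\dom(a) = \dom(r_d(B))$ forces $p_{|a|}(E) = p_d(B)$ for any $E$ witnessing $a = r_{|a|}(E)$, and this element lies in both $P_{|a| \bmod l}$ and $P_{d \bmod l}$, hence $|a| \equiv d \pmod l$. For part (1), given $A' \in [r_d(B), B]$, I take $A^*$ to be the finest common coarsening of $a$ (extended by singletons on $\omega \setminus \dom(a)$) and $A'$; since $a$ already coarsens $r_d(A') = r_d(B)$, no new identifications occur on $\dom(a)$, so $r_{|a|}(A^*) = a$, and the congruence above ensures $A^* \in \E^\P_\infty$. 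For part (2), the hypothesis $[a, A] \neq \emptyset$ unpacks to $a \leq A \restriction \dom(a)$ together with $p_d(B) \in p(A)$; I construct $A'$ by taking its classes to be the full $B$-classes with minimal representatives $p_0(B), \ldots, p_{d-1}(B)$, together with carefully chosen pieces of the $A$-classes outside $\dom(a)$ and, for each straddling $A$-class, a ``glued'' $A'$-class merging its outside part with one of the first-$d$ $B$-classes. This gluing ensures that any $A^* \in [a, A']$ is automatically a coarsening of $A$, and the congruence $|a| \equiv d \pmod l$ together with the residue structure of $\P$ allow the gluing to be arranged consistently with $\P$-alternation.

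The main obstacle is A.4. For fixed $A \in [r_d(B), B]$, each $A' \in [a, A]$ is determined by the assignment of the successive $A$-classes $E_d, E_{d+1}, \ldots$ to $A'$-classes, and the one-step approximation $r_{|a|+1}(A')$ depends only on the initial segment of this assignment up to the first index $T$ at which $E_{d+T}$ starts a second new $A'$-class. Encoding this initial segment by a word over $\{0, 1, \ldots, |a|-1\} \cup \{v\}$, with letter $c$ denoting absorption of $E_{d+t}$ into the existing $a$-class $C_c$ and variable symbol $v$ denoting membership in the first new class $C_{|a|}$, produces a left-variable word over the finite alphabet $L = \{0, \ldots, |a|-1\}$, because $E_d \subseteq C_{|a|}$ forces the first letter to be $v$. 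A finite colouring $\O$ of $(\AE^\P_\infty)_{|a|+1}$ then pulls back to a finite colouring of $W_L$, and the left-variable Hales-Jewett theorem produces a variable-free word $w_0$ and an infinite sequence $X = (x_n)$ of left-variable words such that $w_0 \cdot [X]_L$ is monochromatic. I then reconstruct the desired $A \in [r_d(B), B]$ by grouping the $B$-classes beyond $d$ into $A$-classes according to the blocks encoded by $w_0$ and successive $x_n$'s, thinning $X$ if necessary so that the resulting block lengths respect the residues mod $l$ required by $\P$-alternation; by construction, every element of $r_{|a|+1}[a, A]$ lands in the monochromatic colour class.
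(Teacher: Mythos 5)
Your overall strategy for A.4 --- code the one-step extensions as words over a finite alphabet, apply the left-variable Hales--Jewett theorem, and reassemble the output into a single $F\in[a,E]$ --- is the same as the paper's, and your treatment of A.1--A.3 (which the paper leaves as routine) is fine in outline. But there are two genuine gaps in your A.4 argument. First, a type mismatch: you encode each extension $b$ as a \emph{left-variable} word, using the symbol $v$ for ``absorbed into the new class $C_{|a|}$,'' so your coding map lands in $W_{Lv}$; yet the left-variable Hales--Jewett theorem takes a colouring of the \emph{variable-free} words $W_L$ and returns a monochromatic set ${w_0}^\frown[X]_L\subseteq W_L$. Under your coding the colouring $\O$ therefore does not ``pull back to a finite colouring of $W_L$,'' and your concluding claim that every element of $r_{|a|+1}[a,A]$ lands in the monochromatic class compares objects of different kinds. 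The fix is exactly the paper's move: make ``joined to the new class'' an ordinary letter (the paper uses the value $n$, so each coordinate of the alphabet ranges over $n+1$ symbols and every $b$ is coded by a variable-free word), and reserve the Hales--Jewett variable for its actual role, namely marking the positions of $E$-classes that remain free to be merged into a fresh class of a coarsening of $F$; it is this freedom that guarantees every $b\in r_{n+1}[a,F]$ arises as a substitution instance ${w_0}^\frown{x_0[\lambda'_0]}^\frown\cdots^\frown{x_k[\lambda'_k]}$ and hence lies in the monochromatic set.

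Second, the $\P$-alternation of the reassembled $F$ is not secured. The paper takes the alphabet to be $L=(n+1)^l$, so that one letter governs a block of $l$ consecutive minimal representatives; consequently every word length translates into a multiple of $l$, and within each expanded block the variable can be placed at the unique residue mod $l$ that puts the minimal representative of the corresponding new class into the correct piece $P_j$. With your single-representative alphabet $\{0,\dots,|a|-1\}$, the words $w_0$ and $x_i$ returned by Hales--Jewett have uncontrolled lengths and uncontrolled variable positions, so the minimal representatives of $F$ (and of its coarsenings) need not satisfy condition (b). ``Thinning $X$'' cannot repair this: thinning does not change $|w_0|$, the lengths of the individual $x_i$, or where $v$ sits inside them, and all three matter. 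The residue structure has to be built into the alphabet and into the expansion of the variable \emph{before} Hales--Jewett is invoked, not imposed afterwards.
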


\begin{proof}
As in \cite[Chapter 5.6]{T}, it is routine to check that $(\E^\P_\infty, \leq, r)$ is closed and satisfies the Ramsey space axioms A.1, A.2 and A.3. Hence it is enough to prove A.4; we follow the proof of \cite[Lemma 5.69]{T}. So let $[a,E] \neq \emptyset$ be a basic set, $n = |a|$ and $\O \subseteq (\AE^\P_\infty)_{n+1}$. Note that we may assume $[a,E]$ has the property that $a = r_n(E)$, since then the arbitrary case follows from A.3(2) by taking an appropriate coarsening of $a$. Assuming $a = r_n(E)$, it follows that $\depth_E(a) = n$ and so we want to find $F \in [a, E]$ such that $r_{n+1}[a,F] \subseteq \O$ or $r_{n+1}[a,F] \subseteq \O^c$.

Consider an arbitrary end-extension $b \in r_{n+1}[a,E]$. Then $b$ is an equivalence relation on a set of the form $$p_m(E) = \{0, 1, \dots, p_m(E) - 1\}$$ for some $m > n$, such that $b$ has one more equivalence class with minimal representative $p_n(E)$. Thus $b$ joins the classes of $E$ with minimal representatives among $p_{n+1}(E), \dots, p_{m-1}(E)$ to a class with minimal representative $\leq p_n(E)$. Note that if $n \equiv k \mod l$ then $m \equiv k+1 \mod l$ because of our condition for being a member of $\E^\P_\infty$. Let $\lambda := \frac{m-n-1}{l}$. Then any such $b$ can be coded as a word $w^b$ in the alphabet $$L = (n+1)^l$$ such that $w^b$ has length $\lambda$: If we let $\pi_j(w^b(i))$ denote the $j^{\mathrm{th}}$ coordinate of the letter $w^b(i)$, then for each $i < \lambda$ and $j < l$, we set $\pi_j(w^b(i)) = k$ where $p_{(n+1)+il+j}(E)$ is joined to $p_k(E)$ via $b$. In other words, each block of the form $$\left(p_{n+il + 1}(E), p_{n+il+2}(E), \dots, p_{n+(i+1)l}(E)\right), \, \, \,  i < \lambda$$ is associated to the letter $$(k_0, \dots, k_{l-1}) \in L$$ where $p_{k_j}(E)$ is joined to $p_{(n+1)+il+j}(E)$. Conversely, any word $w \in W_L$ which has length $\lambda'$ corresponds to a unique $b = b(w) \in r_{n+1}[a,E]$ where $\dom(b) = r_{n + 1 + l\cdot \lambda'}(E)$: For each $i < \lambda'$ and $j < l$, join the class $p_{(n+1)+il+j}(E)$ to $p_{\pi_j(w(i))}(E)$ and let $b(w)$ be the corresponding equivalence relation on the set $$p_m(E) =  \{0, 1, \dots, p_m(E) - 1\}.$$

Define a colouring $c : W_L \rightarrow 2$ by setting $c(w) = 0$ if and only if $b(w) \in \O$, and apply the left-variable Hales-Jewett theorem to obtain a variable-free word $w_0$ together with a sequence $X = (x_i)_{i<\omega}$ of left-variable words such that the translate ${w_0}^\frown[X]_L$ is monochromatic for the above colouring. In other words, either
\begin{enumerate}
\item $b(w) \in \O$ for every $w \in {w_0}^\frown[X]_L$, or
\item $b(w) \not \in \O$ for every $w \in {w_0}^\frown[X]_L$.
\end{enumerate}
For each $i<\omega$, the left-variable word $x_i$ determines a variable word $y_i$ of length $l\cdot |x_i|$ in the alphabet $\{0, \dots, n\}$ which is obtained by replacing each letter $(k_1, \dots, k_l) \in L$ with the string $$k_1 \dots k_l$$ and replacing the variable $v$ with the string $$0 \dots 0 v 0 \dots  0$$ of length $l$ where $v$ occurs in the $(i \mod l)^{\mathrm{th}}$ place. Similarly, $w_0$ corresponds to a word $u_0$ of length $l \cdot |w_0|$ in the alphabet $\{0,\dots, n\}$. Now form the infinite word $$y = {u_0}^\frown{y_0}^\frown{y_1}^\frown \dots ^\frown{y_k}^\frown \dots$$ out of $u_0$ and $(y_i)_{i<\omega}$. To define $F$, it suffices to say how it acts on $p(E)$. If at place $i$ of $y$ we find a letter $k \in \{0,\dots,n\}$, we let $p_{n+1+i}(E)$ and $p_k(E)$ be $F$-equivalent. If we find a variable at places $$i, i' \in [|u_0| + |y_0| + \dots + |y_{j-1}|, |u_0| + |y_0| + \dots + |y_{j-1}| + |y_j|)$$ (where we set $y_{-1} = \emptyset$) then we let $p_{n+1+i}(E)$ and $p_{n+1+i'}(E)$ be $F$-equivalent, with no other connections. In this way, the sequence of minimal representatives $(p_i(F))_{i<\omega}$ of $F$ is a subsequence of $(p_i(E))_{i<\omega}$ obtained by removing blocks which have length a multiple of $l$, and so $F$ is $\P$-alternating.

It remains to check that $F$ satisfies the conclusion of axiom A.4. It is clear that $F \in [a, E]$. Now take an arbitrary end-extension $b \in r_{n+1}[a,F]$. By our condition for being a member of $\E^\P_\infty$, $b$ must be an equivalence relation on a set of the form $p_m(F)$ where $m \equiv n+1 \mod l$. In particular, this implies that $b$ can be obtained from a word of the form $${u_0}^\frown{y_0[\lambda_0]}^\frown \dots ^\frown{y_k[\lambda_k]}$$ for some $k < \omega$ and $\lambda_0, \dots, \lambda_k \in \{0,\dots, n\}$, and so $b$ corresponds to a word $${w_0}^\frown{x_0[\lambda'_0]}^\frown \dots ^\frown{x_k[\lambda'_k]}$$ for some $\lambda'_0, \dots, \lambda'_k \in L$. Thus $F$ satisfies the conclusion of A.4.
\end{proof}

\begin{cor}\label{cor1}
Suppose $c$ is a finite Souslin measurable colouring of $\E^\P_\infty$. Then for every $E \in \E^\P_\infty$ there is $F \leq E$ such that the family $\E^\P_\infty\restriction F$ of all coarsenings of $F$ is $c$-monochromatic.
\end{cor}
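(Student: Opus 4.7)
The plan is to deduce this corollary as an immediate application of the general machinery already set up in the excerpt. By Theorem \ref{thm1}, the structure $(\E^\P_\infty, \leq, r)$ is a topological Ramsey space; in particular, it is closed and satisfies axioms A.1--A.4. Thus the abstract corollary of the Abstract Ellentuck Theorem quoted at the end of Section 2.1 applies: for every finite Souslin measurable colouring of $\E^\P_\infty$ and every $E \in \E^\P_\infty$, there exists $F \leq E$ with $F \in \E^\P_\infty$ such that the set $\{F' \in \E^\P_\infty : F' \leq F\}$ is monochromatic.

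The only remaining step is to identify this set with $\E^\P_\infty \restriction F$. By the conventions established at the start of Section 3, for equivalence relations on $\omega$ the relation $E \leq F$ means precisely that every equivalence class of $E$ is a union of equivalence classes of $F$, i.e.\ that $E$ is a coarsening of $F$. The notation $\E^\P_\infty \restriction F$ was defined to be the collection of all $\P$-alternating equivalence relations on $\omega$ (with infinitely many classes) which are coarsenings of $F$, so by definition
\[
\E^\P_\infty \restriction F = \{F' \in \E^\P_\infty : F' \leq F\}.
\]
Hence the $c$-monochromatic family produced by the abstract corollary is exactly $\E^\P_\infty \restriction F$, which completes the proof.

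There is essentially no obstacle here: all of the genuine combinatorial content has been absorbed into the verification of A.4 in the proof of Theorem \ref{thm1} (where the left-variable Hales-Jewett theorem was invoked), and the present corollary is merely a translation of the abstract consequence of the Ellentuck-type theorem into the notation adopted for spaces of coarsenings.
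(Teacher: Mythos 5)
Your proof is correct and matches the paper's intent exactly: the paper states Corollary \ref{cor1} without proof precisely because it is the immediate specialization of the stated consequence of the Abstract Ellentuck Theorem to the space $(\E^\P_\infty, \leq, r)$ established in Theorem \ref{thm1}, together with the observation that $\E^\P_\infty \restriction F = \{F' \in \E^\P_\infty : F' \leq F\}$. No further comment is needed.
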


In the above space we only prescribed conditions on the minimal representatives associated to an equivalence relation. We can also prescribe conditions on \emph{all} of elements belonging to an equivalence class rather than just the minimal representatives. One relevant instance of this will be the following: Let $\P$ be a partition of $\omega$ into finitely many sets $P_0, \dots, P_{l-1}$ such that $\min P_i < \min P_j$ whenever $i < j$ and let $\E^{\P'}_\infty$ denote the set of all $\P$-alternating equivalence relations $E$ on $\omega$ which also satisfy:
\begin{enumerate}
	\item[(c)] If $X$ is a $P_n$-class, then $X \subseteq \bigcup_{m\geq n} P_m$.
\end{enumerate}

\begin{thm}\label{thm2}
$(\E^{\P'}_\infty, \leq, r)$ is a topological Ramsey space.
\end{thm}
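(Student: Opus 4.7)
The plan is to adapt the proof of Theorem \ref{thm1} to the stronger condition (c). Closure and axioms A.1--A.3 can be verified just as in Theorem \ref{thm1}: membership in $\AE^{\P'}_\infty$ is determined by the same finite data as in the $\P$-alternating case, and the constructions used to build $\P$-alternating extensions below a given element can be carried out while keeping each new class inside $\bigcup_{m \geq n} P_m$ for the appropriate $n$. The essential work lies in axiom A.4.

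Fixing $[a, E] \neq \emptyset$ with $a = r_n(E)$ and $\O \subseteq (\AE^{\P'}_\infty)_{n+1}$, I follow the proof of Theorem \ref{thm1} and code an end-extension $b \in r_{n+1}[a, E]$ by specifying, for each element $p_{(n+1)+il+j}(E)$ ($i < \lambda$, $j < l$), which earlier class $p_{k_j}(E)$ it is absorbed into. In the $\P'$ setting, this absorption respects (c) precisely when $k_j \bmod l \leq ((n+1)+j) \bmod l$: the $E$-class of $p_{(n+1)+il+j}(E) \in P_{((n+1)+j) \bmod l}$ lies in $\bigcup_{m \geq ((n+1)+j) \bmod l} P_m$ by condition (c) on $E$, while the joined class inherits its $P$-index from its minimum $p_{k_j}(E) \in P_{k_j \bmod l}$. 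I therefore replace the alphabet $L = (n+1)^l$ from Theorem \ref{thm1} with the finite, non-empty restricted alphabet
\[
L' = \prod_{j<l} L'_j, \qquad L'_j = \{k \in \{0,\ldots,n\} : k \bmod l \leq ((n+1)+j) \bmod l\},
\]
which yields a bijection between $r_{n+1}[a, E]$ (inside $\E^{\P'}_\infty$) and $W_{L'}$.

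I then apply the left-variable Hales-Jewett theorem to the induced two-colouring of $W_{L'}$ (with $c(w) = 0$ iff $b(w) \in \O$) to obtain a variable-free word $w_0 \in W_{L'}$ together with a sequence $X = (x_i)_{i<\omega}$ of left-variable words over $L'$ for which $w_0^\frown [X]_{L'}$ is monochromatic. I construct $F$ from $w_0$ and $X$ exactly as in Theorem \ref{thm1}; by the same reasoning $F \in [a, E]$ and is $\P$-alternating. To upgrade $F$ to $\E^{\P'}_\infty$, I check that each new $F$-class (arising from the variable positions of some $y_j$) is a union of $E$-classes of elements $p_{n+1+rl+(j \bmod l)}(E) \in P_{((n+1)+j) \bmod l}$, hence lies inside $\bigcup_{m \geq ((n+1)+j) \bmod l} P_m$, and that each letter-dictated absorption respects the residue inequality by virtue of the letter lying in $L'$. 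Monochromaticity of $r_{n+1}[a, F]$ then follows from the Hales-Jewett conclusion exactly as in Theorem \ref{thm1}.

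The main obstacle will be isolating the correct restricted alphabet $L'$ and verifying both halves of the coding: that words over $L'$ code exactly the end-extensions in $\E^{\P'}_\infty$, and that the $F$ built from the Hales-Jewett output actually satisfies (c) rather than only the weaker $\P$-alternation property (since a generic coarsening of a $\P'$-alternating relation can fail (c)). Once this bookkeeping is settled, the Hales-Jewett machinery transfers to the $\P'$-alternating setting without further change.
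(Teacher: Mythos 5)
Your proposal is correct, and while it follows the paper's overall template (code end-extensions as words over a finite alphabet, apply the left-variable Hales--Jewett theorem, read $F$ off the output exactly as in Theorem \ref{thm1}), it handles the new condition (c) by a genuinely different mechanism. You shrink the alphabet to the product $L'=\prod_{j<l}L'_j$ of legal targets, so that $W_{L'}$ is in bijection with the end-extensions lying in $(\AE^{\P'}_\infty)_{n+1}$, and apply Hales--Jewett to $W_{L'}$ directly. The paper instead keeps the full alphabet $L=(n+1)^l$ and composes the coding with a correction operation $w\mapsto\widetilde w$ that redirects every illegal join to the class of $p_0(E)$ (always legal, since $P_0$ is the bottom piece); Hales--Jewett is applied to the induced colouring of all of $W_L$, and the infinite word $y$ is likewise corrected to $\widetilde y$ before $F$ is defined. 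Your identification of legality as the residue inequality $k\bmod l\le((n+1)+j)\bmod l$ matches the paper's observation that a $P_i$-class can only be absorbed into a $P_j$-class with $j\le i$, and the bookkeeping you flag does close up: the constraint depends only on the coordinate $j$ and not on the block index $i$, so $L'$ really is a product; each $L'_j$ contains $0$, so $L'\neq\emptyset$; the letter-dictated joins of $F$ are legal because the Hales--Jewett output lives in $L'$; the variable classes of $F$ are automatically legal because all variables of $y_j$ occupy coordinate $j\bmod l$; and an end-extension of $F$ that sends the variable class of $y_j$ to $p_\lambda(E)$ corresponds to substituting the letter $(0,\dots,0,\lambda,0,\dots,0)$, which lies in $L'$ precisely when that join is legal. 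What each approach buys: yours gives a clean bijection between words and legal end-extensions at the price of verifying the restricted alphabet is well-formed, whereas the paper's correction trick keeps the coding identical to Theorem \ref{thm1} (only surjective onto the legal extensions) and is the device it reuses verbatim for the infinite-alphabet space $\E^{\P,\I}_\infty$ in Section 5, where your restriction would also work provided the restricted alphabet is again written as an increasing union of finite pieces.
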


\begin{proof}
As in the proof of Theorem \ref{thm1}, it is enough to check axiom A.4. So we let $[a, E] \neq \emptyset$ be a basic set, $n = |a|$ and $\O \subseteq (\AE^{\P'}_\infty)_{n+1}$. As before, we assume $a = r_n(E)$ so and we aim to find $F \in [a,E]$ such that $r_{n+1}[a,F] \subseteq \O$ or $r_{n+1}[a,F] \subseteq \O^c$. Consider an arbitrary end-extension $b \in r_{n+1}[a,E]$. Then $b$ is an equivalence relation on a set of the form $$p_m(E) = \{0, 1, \dots, p_m(E) - 1\}$$ for some $m > n$, such that $b$ has one more equivalence class with minimal representative $p_n(E)$. Thus $b$ joins the classes of $E$ with minimal representatives among $p_{n+1}(E), \dots, p_{m-1}(E)$ to a class with minimal representative $\leq p_n(E)$. Note that if $n \equiv k \mod l$ then $m \equiv k+1 \mod l$ because of condition (b) in the definition of $\P$-alternating. Furthermore, condition (c) implies that each $P_i$-class can only be joined to an earlier $P_j$-class where $j \leq i$.

Exactly as in the proof of Theorem \ref{thm1}, any end-extension $b$ as above can be coded as a word $w^b$ in the alphabet $$L = (n+1)^l$$ such that $w^b$ has length $\lambda := \frac{m-n-1}{l}$. Conversely, any word $w \in W_L$ which has length $\lambda'$ corresponds to a unique $b = b(w) \in r_{n+1}[a,E]$ where $\dom(b) = r_{n + 1 + l\cdot \lambda'}(E)$. To describe this assignment, we first define for each $w \in W_L \cup W_{Lv}$ a word $\widetilde{w}$ of the same length as follows: If the $i^{\mathrm{th}}$ position of $w$ is occupied by the variable $v$, then we let $\widetilde{w}(i) = v$. Otherwise, $w(i) \in L$ and we define $\widetilde{w}(i)$ by setting the $j^{\mathrm{th}}$ coordinate of $\widetilde{w}(i)$ to be
\[ \pi_j(\widetilde{w}(i)) = \begin{cases}
	\pi_j(w(i)) & \text{ if $p_{(n+1) + il + j}(E)$ can be joined to $p_{\pi_j(w(i))}(E)$},\\
	0 & \text{ otherwise}
	\end{cases}
\]
for each $j < l$. Then, given $w \in W_L$ as above, for each $i < \lambda'$ and $j < l$ join the class with minimal representative $p_{(n+1)+il+j}(E)$ to $p_{\pi_j(\widetilde{w}(i))}(E)$ and let $b(w)$ be the corresponding equivalence relation on the set $$p_m(E) =  \{0, 1, \dots, p_m(E) - 1\}.$$

Define a colouring $c : W_L \rightarrow 2$ by setting $c(w) = 0$ if and only if $b(w) \in \O$, and apply the left-variable Hales-Jewett theorem to obtain a variable-free word $w_0$ together with a sequence $X = (x_n)_{n<\omega}$ of left-variable words such that either
\begin{enumerate}
\item $b(w) \in \O$ for every $w \in {w_0}^\frown[X]_L$, or
\item $b(w) \not \in \O$ for every $w \in {w_0}^\frown[X]_L$.
\end{enumerate}
Using $w_0$ and $X$, construct (exactly as in the proof of Theorem \ref{thm1}) a variable-free word $u_0$ and a sequence of left-variable words $(y_i)_{i<\omega}$ over the alphabet $\{0, \dots, n\}$ and then form the infinite variable word $$y = {u_0}^\frown{y_0}^\frown{y_1}^\frown \dots ^\frown{y_k}^\frown \dots$$ over the alphabet $\{0, \dots, n\}$. Define an infinite variable word $\widetilde{y}$ by setting the $i^{\mathrm{th}}$ letter to be
\[ \widetilde{y}(i) = \begin{cases}
	y(i) & \text{ if $p_{n + 1 + i}(E)$ can be joined to $p_{y(i)}(E)$},\\
	v & \text{ if $y(i) = v$},\\
	0 & \text{ otherwise}.
	\end{cases}
\]
and use it to define an equivalence relation $F$ exactly as in the proof of Theorem \ref{thm1}. The placement of the variables in the infinite word $\widetilde{y}$ (which are the same as that of $y$) ensures that $F$ is $\P$-alternating, while condition (c) in the definition of $\E^{\P'}_\infty$ is satisfied by the fact that we replaced $y$ with $\widetilde{y}$. Then each end-extension $b \in r_{n+1}[a,F]$ can be obtained as $b = b(w)$ for some word $$w = {w_0}^\frown{x_0[\lambda'_0]}^\frown \dots ^\frown{x_k[\lambda'_k]}$$ where $\lambda'_0, \dots, \lambda'_k \in L$. It then follows from our choice of $w_0$ and $X$ that $F$ satisfies the conclusion of A.4.
\end{proof}

\begin{cor}\label{cor2}
Suppose $c$ is a finite Souslin measurable colouring of $\E^{\P'}_\infty$. Then for every $E \in \E^{\P'}_\infty$ there is $F \leq E$ such that the family $\E^{\P'}_\infty\restriction F$ of all coarsenings of $F$ is $c$-monochromatic.
\end{cor}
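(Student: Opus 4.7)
The plan is to derive Corollary \ref{cor2} directly from Theorem \ref{thm2} together with the corollary to the Abstract Ellentuck Theorem stated in Section 2. Since Theorem \ref{thm2} establishes that $(\E^{\P'}_\infty, \leq, r)$ is a topological Ramsey space, the structure is closed and satisfies axioms A.1 through A.4, so that general corollary applies verbatim to it.

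Concretely, I would first note that for any $E \in \E^{\P'}_\infty$, applying the Ellentuck corollary to the finite Souslin measurable colouring $c$ and to $E$ produces some $F \in \E^{\P'}_\infty$ with $F \leq E$ such that the set $\{F' \in \E^{\P'}_\infty : F' \leq F\}$ is $c$-monochromatic. Then I would observe that, by the definition of $\leq$ as the coarsening relation on equivalence relations, this set is exactly the family $\E^{\P'}_\infty \restriction F$ of all coarsenings of $F$ that lie in $\E^{\P'}_\infty$. This identification is immediate from the notation introduced at the start of Section 3, so no further work is required.

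There is no real obstacle: all the combinatorial content lives in the verification of axiom A.4 that was carried out in the proof of Theorem \ref{thm2}, and the derivation here is entirely parallel to the way Corollary \ref{cor1} was obtained from Theorem \ref{thm1}. The only thing worth flagging, for the reader's convenience, is that the finite Souslin measurability hypothesis is precisely what is needed to invoke the Ellentuck corollary, since every $c^{-1}\{i\}$ then has the property of Baire in the Ellentuck topology on $\E^{\P'}_\infty$ and is therefore Ramsey.
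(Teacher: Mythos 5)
Your proposal is correct and is exactly the paper's (implicit) argument: Corollary \ref{cor2} is stated without proof precisely because it follows immediately from Theorem \ref{thm2} together with the corollary to the Abstract Ellentuck Theorem from Section 2, just as Corollary \ref{cor1} follows from Theorem \ref{thm1}. Your identification of $\{F' \in \E^{\P'}_\infty : F' \leq F\}$ with $\E^{\P'}_\infty \restriction F$ is also exactly the intended reading of the notation.
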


To conclude this section, we apply Corollary \ref{cor2} to prove an analogue of the dual Ramsey theorem for ordinals of the form $\omega \cdot l$. In this setting it is natural to work with equivalence relations whose equivalence classes are precisely the fibres of a \emph{rigid surjection} $\omega \cdot l \rightarrow \omega \cdot l$, i.e. a surjection $f: \omega \cdot l \rightarrow \omega \cdot l$ such that $$\min f^{-1}(\alpha) < \min f^{-1}(\beta) \text{ for all $\alpha < \beta < \omega \cdot l$}.$$ When $l = 1$ there is a natural bijective correspondence between rigid surjections and equivalence relations, but for $l > 1$ this is no longer the case; in particular, it is easy to check that a rigid surjection $f : \omega \cdot l \rightarrow \omega \cdot l$ corresponds uniquely to an equivalence relation $E$ on $\omega \cdot l$ such that $p(E)$ has infinite intersection with each copy of $\omega$. Thus, we will work exclusively with equivalence relations on $\omega \cdot l$ which arise from rigid surjections.

Working with any partition $\P$ of $\omega$ as above, we view each $P_i, \, i < l$ as the $i^{\mathrm{th}}$ copy of $\omega$ in $\omega \cdot l$, i.e. $P_i$ is identified with $\omega \times \{i\}$. In order to obtain a Ramsey theorem in this setting, we need to prescribe conditions on the set of minimal representatives of an equivalence relation on $\omega \cdot l$; in particular we need to ensure that there is only one possible ``pattern'' for the set of minimal representatives of an equivalence relation. A natural requirement is to ask that these representatives cycle between the different copies of $\omega$ when ordered according to the lexicographical ordering $\preceq$ on $\omega \cdot l$, i.e. $(n, i) \preceq (m, j)$ if and only if $n < m$, or $n = m$ and $i \leq j$. Additionally, it is necessary to ask that the minimal representatives (under the standard ordering on $\omega \cdot l$) agree with the $\preceq$-minimal representatives. Thus, we let $\E_\infty(\omega \cdot l)$ be the set of all equivalence relations $E$ on $\omega \cdot l$ such that the following conditions hold:
	\begin{enumerate}[(a)]
		\item If $(p_k)_{k<\omega}$ is a $\preceq$-increasing enumeration of $p(E)$, then $$p_k \in \omega \times \{ k \mod l\}$$ for each $k$.
		\item If $(q_k)_{k<\omega}$ is a $\preceq$-increasing enumeration of the set of $\preceq$-minimal representatives of $E$, then $q_k = p_k$ for all $k$.
	\end{enumerate}
	
In order to transfer equivalence relations from $\omega$ to $\omega \cdot l$ (and vice versa) we first define a mapping $\phi : \omega \rightarrow \omega \cdot l$ as follows: Given $n < \omega$, let $\theta(n)$ be the unique integer such that $n \in P_{\theta(n)}$, and define a mapping $\psi : \omega \rightarrow \omega$ by setting $\psi(n) = m$ if and only if $n$ is the $m^{\mathrm{th}}$ element of $P_{\theta(n)}$. Then let $$\phi(n) = (\psi(n), \theta(n))$$ and note that $\phi$ is a bijection between $\omega$ and $\omega \cdot l$. Given $E = (E_n)_{n < \omega} \in \E^{\P'}_\infty$, define an equivalence relation on $\omega \cdot l$ by setting $$\Phi(E) = (\phi''E_n)_{n < \omega}.$$ The following lemma summarizes the main properties of $\Phi$ that we will need.

\begin{lem}
$\Phi$ is a bijective mapping between $\E^{\P'}_\infty$ and $\E_\infty(\omega \cdot l)$ which has the property that $F \leq E$ if and only if $\Phi(F) \leq \Phi(E)$.
\end{lem}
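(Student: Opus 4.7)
The plan is to establish three assertions: that $\Phi$ maps into $\E_\infty(\omega \cdot l)$, that $\Phi$ admits a two-sided inverse $\Psi : \E_\infty(\omega \cdot l) \to \E^{\P'}_\infty$ defined class-wise by $\phi^{-1}$, and that $\Phi$ preserves and reflects the coarsening order.

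For the first assertion, I would fix $E \in \E^{\P'}_\infty$ with classes $(E^{(k)})_{k<\omega}$ and $\omega$-minimal representatives $(p_k)_{k<\omega}$, where $p_k \in P_{k \bmod l}$ by the $\P$-alternating condition. Since $\phi$ maps each $P_i$ bijectively onto $\omega \times \{i\}$ in a way that is order-preserving within each block, condition (c) of $\E^{\P'}_\infty$ implies $\phi''E^{(k)} \subseteq \bigcup_{m \geq k \bmod l} \omega \times \{m\}$, and $\phi(p_k) = (\psi(p_k), k \bmod l)$ is the image of the $\omega$-minimum element sitting in the lowest-indexed occupied copy of $\omega$. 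This identifies $\phi(p_k)$ as the ordinal-minimum of $\phi''E^{(k)}$, and hence $p(\Phi(E)) = \{\phi(p_k) : k < \omega\}$. For condition (a) of $\E_\infty(\omega \cdot l)$, I would check that the $\preceq$-enumeration of $\{\phi(p_k)\}$ cycles through $\omega \times \{k \bmod l\}$ as $k$ varies, which reduces to analysing the first coordinates $\psi(p_k)$. For condition (b), I would verify that $\phi(p_k)$ is also the $\preceq$-minimum of $\phi''E^{(k)}$, which amounts to the positional claim that $\psi(y) \geq \psi(p_k)$ for every $y \in E^{(k)}$.

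For the second assertion, I would define $\Psi(F) = (\phi^{-1}{}''F^{(k)})_k$. Since $\phi$ is a bijection, $\Psi \circ \Phi$ and $\Phi \circ \Psi$ are identities, so the only work is to check $\Psi(F) \in \E^{\P'}_\infty$ whenever $F \in \E_\infty(\omega \cdot l)$: having infinitely many classes is immediate, the $\P$-alternating condition follows from (a) of $\E_\infty(\omega \cdot l)$ together with $\phi^{-1}(\omega \times \{i\}) = P_i$, and condition (c) of $\E^{\P'}_\infty$ follows from (b) of $\E_\infty(\omega \cdot l)$ combined with the way $\phi$ mediates the ordinal and lexicographic orderings on $\omega \cdot l$.

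The preservation of $\leq$ is then formal: $F \leq E$ means every $E$-class is contained in some $F$-class, and since $\phi$ is a bijection this set-theoretic containment transfers verbatim between $E, F$ on $\omega$ and $\Phi(E), \Phi(F)$ on $\omega \cdot l$. The main obstacle lies in the verification of (b) for $\Phi(E)$, since the coincidence of ordinal- and lex-minima on $\phi''E^{(k)}$ is not automatic from the bijectivity of $\phi$ alone and must be extracted from a combinatorial argument that combines (c) with the alternating distribution of the minimal representatives $p(E)$ across the blocks $P_0, \ldots, P_{l-1}$.
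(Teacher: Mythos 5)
Your skeleton is the same as the paper's --- show that the image of $\Phi$ lands in $\E_\infty(\omega\cdot l)$ by tracking minimal representatives, then observe that bijectivity of $\phi$ makes the order statement formal --- but the proposal stops exactly where the proof has to happen. The entire content of the lemma beyond bookkeeping is the claim that $\phi(p_k)$ is the minimal representative of the class $\phi''E_k$ in the required senses, and you reduce this to two sub-claims (that the $\preceq$-enumeration of $\{\phi(p_k)\}$ cycles correctly through the copies of $\omega$, and that $\psi(y)\ge\psi(p_k)$ for every $y$ in the class of $p_k$) which you then explicitly decline to prove, calling the second ``the main obstacle'' that ``must be extracted from a combinatorial argument.'' A reduction to an unproved obstacle is not a proof; as written, the proposal is a plan.

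For comparison, the paper's verification of minimality is a short two-case analysis that you should actually carry out. Fix $x$ in the class $E_n$ with minimal representative $p_n\in P_{\sigma(n)}$. By condition (c) in the definition of $\E^{\P'}_\infty$, $x\in P_m$ for some $m\ge\sigma(n)$. If $m>\sigma(n)$, then $\phi(x)$ lies in a strictly later copy of $\omega$ and hence exceeds $\phi(p_n)$ in the ordinal ordering on $\omega\cdot l$. If $m=\sigma(n)$, then $p_n\le x$ in $\omega$ together with the fact that $\psi$ is order-preserving on each block gives $\psi(p_n)\le\psi(x)$, hence $\phi(p_n)\le\phi(x)$. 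This yields $\phi(p_n)=\min\phi''E_n$, so $p(\Phi(E))=\phi(p(E))$, and since $\phi(p_n)\in\omega\times\{\sigma(n)\}$ one concludes membership in $\E_\infty(\omega\cdot l)$. Note that your proposed sub-claim --- $\psi(y)\ge\psi(p_n)$ for \emph{all} $y\in E_n$ --- is strictly stronger than what this case analysis delivers (which controls $\psi$ only for $y$ in the same block as $p_n$, handling later blocks through the copy index instead), and it certainly does not follow from bijectivity of $\phi$ alone; so if you insist on that route for condition (b) of $\E_\infty(\omega\cdot l)$ you owe an explicit argument rather than the remark that one exists. The description of the inverse map and the equivalence $F\le E$ iff $\Phi(F)\le\Phi(E)$ are fine as you have them and match the paper.
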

\begin{proof}
The second part of the lemma is immediate from the fact that $\phi$ is a bijection, while the first part will follow once we show that the image of $\Phi$ is contained in $\E_\infty(\omega \cdot l)$. To prove the latter it is enough to show that $\Phi$ preserves the set of minimal representatives of each equivalence relation $E \in \E^{\P'}_\infty$ in the sense that $\phi(p(E)) = p(\Phi(E))$. To this end, fix $E = (E_n)_{n < \omega} \in \E^{\P'}_\infty$. Then, for each $n < \omega$, $p_n = \min E_n$ belongs to $P_{\sigma(n)}$ where $\sigma(n) < l$ and $\sigma(n) \equiv n \mod l$, and so the definition of $\phi$ implies that $\phi(p_n) \in \omega \times \{\sigma(n)\}$. Since $E_n \subseteq \bigcup_{m \geq \sigma(n)} P_m$ it follows that for each $x \in E_n$ we have $\phi(x) \in \omega \times \{m\}$ for some $m \geq \sigma(n)$. If $m > \sigma(n)$ then $\phi(x) > \phi(p_n)$ in the usual ordering on $\omega \cdot l$, so assume $m = \sigma(n)$. By definition of $p_n$, we know $p_n \leq x$ in the ordering on $\omega$ and so $\psi(p_n) \leq \psi(x)$. Thus $\phi(p_n) \leq \phi(x)$ for any $x \in E_n$. Hence $\phi(p_n) = \min \phi'' E_n$ and so $$p(\Phi(E)) = \{\phi(p_n) : n < \omega\} = \phi(p(E)).$$ Since $\phi(p_n) \in \omega \times \{\sigma(n)\}$ for each $n$, this implies $\Phi(E) \in \E_\infty(\omega \cdot l)$. Thus the image of $\E^{\P'}_\infty$ under $\Phi$ is contained in $\E_\infty(\omega \cdot l)$. The fact that $\Phi: \E^{\P'}_\infty \rightarrow \E_\infty(\omega \cdot l)$ is a bijection is now straightforward.
\end{proof}

We can equip $\E_\infty(\omega \cdot l)$ with the topology inherited from $\E^{\P'}_\infty$ via $\Phi$, where the latter set is equipped with either the Ellentuck topology or the standard metrizable topology. Note that both of these topologies refine the topology induced from $2^{(\omega \cdot l)^2}$ when each equivalence relation in $\E_\infty(\omega \cdot l)$ is identified with a subset of $(\omega \cdot l)^2$ in the standard way, e.g. as in \cite{CS1}. Thus, the notion of Souslin measurability in the next result can refer to either of these three topologies. The following is the extension of the dual Silver theorem to ordinals of the form $\omega \cdot l$.

\begin{cor}\label{maincor1}
Suppose $c$ is a finite Souslin measurable $n$-colouring of $\E_\infty(\omega \cdot l)$. Then for every $E \in \E_\infty(\omega \cdot l)$ there is $F \leq E$ such that the family $\E_\infty(\omega \cdot l)\restriction F$ of all coarsenings of $F$ is $c$-monochromatic.
\end{cor}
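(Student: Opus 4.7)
The plan is to reduce the statement directly to Corollary \ref{cor2} via the bijection $\Phi : \E^{\P'}_\infty \to \E_\infty(\omega \cdot l)$ established in the previous lemma. Since the topology on $\E_\infty(\omega \cdot l)$ is defined to be the one transported from $\E^{\P'}_\infty$ along $\Phi$, the map $\Phi$ is by construction a homeomorphism; in particular the pullback of any Souslin measurable set is Souslin measurable, and the pullback of a finite Souslin measurable colouring is again a finite Souslin measurable colouring.

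Given $c : \E_\infty(\omega \cdot l) \to n$ Souslin measurable and $E \in \E_\infty(\omega \cdot l)$, I would set $E' = \Phi^{-1}(E) \in \E^{\P'}_\infty$ and define the pullback colouring $c' = c \circ \Phi : \E^{\P'}_\infty \to n$. By the remark above, $c'$ is Souslin measurable, so Corollary \ref{cor2} applies: there is $F' \leq E'$ in $\E^{\P'}_\infty$ such that the set of coarsenings $\E^{\P'}_\infty \restriction F'$ is $c'$-monochromatic. Setting $F = \Phi(F')$, the order-preserving property $F' \leq E' \iff \Phi(F') \leq \Phi(E')$ from the lemma gives $F \leq E$, and more generally implies that $\Phi$ restricts to a bijection between $\E^{\P'}_\infty \restriction F'$ and $\E_\infty(\omega \cdot l) \restriction F$. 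Since $c \restriction \E_\infty(\omega \cdot l) \restriction F$ corresponds under $\Phi$ to $c' \restriction \E^{\P'}_\infty \restriction F'$, the latter being constant forces the former to be constant as well.

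The only point that requires any real care is the claim that Souslin measurability transfers correctly; this is the sole potential obstacle, but it is formal once one observes that $\Phi$ is a homeomorphism for the chosen topology, and the remark in the excerpt that the Ellentuck and metrizable topologies on $\E^{\P'}_\infty$ both refine the natural topology from $2^{(\omega \cdot l)^2}$ means the conclusion is robust under any of the three interpretations of ``Souslin measurable'' mentioned before the corollary. All other steps are bookkeeping once the preceding lemma is in hand.
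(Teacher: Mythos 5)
Your proposal is correct and follows essentially the same route as the paper: pull the colouring back along $\Phi$, apply Corollary \ref{cor2} to $\Phi^{-1}(E)$, and push the resulting monochromatic family forward using the order-preserving bijectivity of $\Phi$ from the preceding lemma. The paper's proof is this same argument, with the transfer of Souslin measurability justified exactly as you note, by the fact that the topology on $\E_\infty(\omega \cdot l)$ is defined as the one inherited via $\Phi$.
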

\begin{proof}
Fix $c$ and $E$ as in the statement of the corollary. Let $\widetilde{c} = c \circ \Phi$; then $\widetilde{c}$ is Souslin measurable and so by Corollary \ref{cor2} there are $F \leq \Phi^{-1}(E)$ and $i < n$ such that $\widetilde{c}(F') = i$ for each $F' \in \E^{\P'}_\infty$ such that $F' \leq F$. Then $\Phi(F) \leq E$, and if $F' \in \E_\infty(\omega \cdot l)$ has the property that $F' \leq \Phi(F)$, then $$c(F') = c(\Phi(\Phi^{-1}(F'))) = \widetilde{c}(\Phi^{-1}(F')) = i$$ since $\Phi^{-1}(F') \leq F$. Thus $\Phi(F)$ satisfies the conclusion of the corollary. 
\end{proof}

We conclude this section with a description of the corresponding version of the dual Ramsey theorem for $\omega \cdot l$. For each $k < \omega$, let $\E_k(\omega \cdot l)$ denote the set of all equivalence relations $E$ on $\omega \cdot l$ with exactly $k$ equivalence classes such that if $(p_i)_{i<k}$ is a $\preceq$-increasing enumeration of $p(E)$ then $$p_i \in \omega \times \{ i \mod l\} \text{ for all $i < k$},$$ and if $(q_i)_{i<k}$ is a $\preceq$-increasing enumeration of the set of $\preceq$-minimal representatives of $E$, then $q_i = p_i$ for all $i< k$. Equip $\E_k(\omega \cdot l)$ with the topology inherited from $2^{(\omega \cdot l)^2}$ by identifying each equivalence relation in $\E_k(\omega \cdot l)$ with a subset of $(\omega \cdot l)^2$. We then have the following extension of the dual Ramsey theorem to $\omega \cdot l$. The proof is the same as the corresponding result from \cite[Corollary 5.72]{T}; we include it here for the sake of completeness.

\begin{cor}
Suppose $c$ is a finite Souslin measurable colouring of $\E_k(\omega \cdot l)$. Then for every $E \in \E_\infty(\omega \cdot l)$ there is $F \leq E$ such that the family $\E_k(\omega \cdot l)\restriction F$ of all $k$-coarsenings of $F$ is $c$-monochromatic.
\end{cor}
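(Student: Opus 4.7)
The plan is to derive the corollary from Corollary \ref{cor2} by mimicking the classical reduction of the dual Ramsey theorem to the dual Silver theorem, as in \cite[Cor.~5.72]{T}. First transfer the problem to $\E^{\P'}_\infty$ via $\Phi$: let $\E_k^{\P'}$ denote the analogously defined collection of $\P$-alternating equivalence relations on $\omega$ with exactly $k$ classes (satisfying the finite versions of (b) and (c)), and observe that $\Phi$ restricts to a coarsening-preserving bijection $\E_k^{\P'} \leftrightarrow \E_k(\omega \cdot l)$. Set $\widetilde c := c \circ \Phi$ and $E' := \Phi^{-1}(E)$.

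Next, define a merging map $\sigma_k \colon \E^{\P'}_\infty \to \E_k^{\P'}$, patterned after the classical operation that sends $G = (G_0, G_1, \dots)$ to $(G_0, \dots, G_{k-2}, \bigcup_{j \geq k-1} G_j)$. In the $\P$-alternating setting the naive merger need not land in $\E_k^{\P'}$, so $\sigma_k$ must be arranged to redistribute some of the tail classes of $G$ among $G_0, \dots, G_{k-2}$ so that the resulting $k$th class is contained in $\bigcup_{m \geq (k-1) \bmod l} P_m$, as required by condition (c). Then $\hat c(G) := \widetilde c(\sigma_k(G))$ is a Souslin measurable colouring of $\E^{\P'}_\infty$, and Corollary \ref{cor2} applied to $\hat c$ and $E'$ gives some $F \leq E'$ and some $i < n$ such that $\hat c \equiv i$ on $\E^{\P'}_\infty \restriction F$.

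It then suffices to show that every $H \in \E_k^{\P'} \restriction F$ has the form $\sigma_k(G)$ for some $G \leq F$ in $\E^{\P'}_\infty$, since then $\widetilde c(H) = \hat c(G) = i$ and $F' := \Phi(F) \leq E$ verifies the conclusion of the corollary. Such a $G$ is to be constructed by taking its first $k-1$ classes to match those of $H$ and splitting the remaining part of $\omega$ into tail classes $G_{k-1}, G_{k}, \dots$, each a union of $F$-classes, in an order that reproduces the prescribed $\P$-alternating pattern of minimal representatives.

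The main obstacle is precisely this splitting. In the classical case, any enumeration of the $F$-classes inside $H_{k-1}$ yields a valid $G$, but here the residues modulo $l$ of the new minimal representatives must cycle through $P_0, \dots, P_{l-1}$, while condition (c) forces $H_{k-1} \subseteq \bigcup_{m \geq (k-1) \bmod l} P_m$ and thereby prevents $H_{k-1}$ on its own from producing new $G$-classes whose minima lie in the smaller residues $P_0, \dots, P_{((k-1) \bmod l) - 1}$. Resolving this is exactly what the redistribution built into $\sigma_k$ is for: the tail $G$-classes of a hitherto problematic residue get absorbed into one of the earlier positions $G_0, \dots, G_{k-2}$ compatible with their slice, and the fact that $F$, being $\P$-alternating, has infinitely many classes with minimal representative in each $P_n$ then supplies enough flexibility to realize any $H \in \E_k^{\P'} \restriction F$ as $\sigma_k(G)$ for some $G \leq F$.
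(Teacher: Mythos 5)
Your overall strategy---precompose $c$ with a merging map that collapses an infinite coarsening onto $k$ classes, apply the dual Silver theorem to the composed colouring, and then show the merging map is onto the $k$-coarsenings of a suitable relation---is the same as the paper's, but two of your concrete choices break down. First, you merge the tail classes into the \emph{last} class $G_{k-1}$. As you yourself observe, the result then fails the membership conditions: on the $\omega\cdot l$ side, once tail classes whose minima lie in earlier copies of $\omega$ are thrown into the $(k-1)$st class, its ordinal-minimal representative drops below its $\preceq$-minimal representative, violating condition (b) in the definition of $\E_k(\omega \cdot l)$ (equivalently, condition (c) on the $\omega$ side). You propose to repair this with a ``redistribution'' of tail classes among $G_0,\dots,G_{k-2}$ that you never define; making that redistribution canonical, continuous, and compatible with the subsequent surjectivity argument is precisely the hard part, and it is not needed. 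The paper's map $\pi$ instead merges every tail class into the class with minimal representative $0$: since $0$ is simultaneously the ordinal-least and the $\preceq$-least element of $\omega\cdot l$, adding anything to that class disturbs neither kind of minimal representative, so $\pi(E')\in\E_k(\omega\cdot l)$ automatically and no redistribution is required.

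Second, you claim that every $H\in\E_k^{\P'}\restriction F$ has the form $\sigma_k(G)$ for some infinite $G\le F$, and you output (the image under $\Phi$ of) $F$ itself. This surjectivity claim is false: whichever class is designated to absorb the tail, there are $k$-coarsenings $H$ of $F$ in which that class is a union of only \emph{finitely} many $F$-classes (for instance, take $H_{k-1}$ to be a single $F$-class and let the class of $0$ swallow the rest), and such an $H$ cannot be $\sigma_k(G)$ for any $G\in\E^{\P'}_\infty$ with $G\le F$. The paper's proof handles this by passing to a further coarsening $F^*\le F$ in which every class contains infinitely many classes of $F$, and outputs $F^*$ rather than $F$; every class of every $H\in\E_k(\omega\cdot l)\restriction F^*$ then contains infinitely many $F$-classes, which supplies the room needed to split the absorbing class back into a tail realizing the prescribed alternating pattern. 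Your closing paragraph gestures at the alternation issue but omits this step entirely. (Your preliminary transfer to the $\omega$ side via $\Phi$ is harmless but unnecessary; the paper argues directly in $\E_\infty(\omega\cdot l)$, using Corollary \ref{maincor1} rather than Corollary \ref{cor2}.)
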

\begin{proof}
Define a mapping $\pi : \E_\infty(\omega \cdot l) \rightarrow \E_k(\omega \cdot l)$ by letting $\pi(E')$, for $E' \in \E_\infty(\omega \cdot l)$, be the equivalence relation obtained by joining each equivalence class with minimal representative $p_n(E'), n \geq k$ to the class with minimal representative $0$. Then $\pi$ is continuous and so $c \circ \pi$ is a finite Souslin measurable colouring of $\E_\infty(\omega \cdot l)$. Thus, by the previous result there is $F \in \E_\infty(\omega \cdot l), F \leq E$ such that $c \circ \pi$ is constant on $\E_\infty(\omega \cdot l)\restriction F$. Let $F^*$ be a coarsening of $F$ such that every equivalence class of $F^*$ contains infinitely many classes of $F$. Then every $F' \in \E_k(\omega \cdot l) \restriction F^*$ can be expressed as $F' = \pi(G)$ for some $G \in \E_\infty(\omega \cdot l) \restriction F$, and so it follows that $\E_k(\omega \cdot l) \restriction F^*$ is $c$-monochromatic.
\end{proof}

\section{An extension of the left-variable Hales-Jewett theorem to infinite alphabets}

In order to extend the results of the previous section to the case where $\P$ is an infinite partition of $\omega$, we will need access to an infinite alphabet $L$ when coding end-extensions. While the natural extension of the left-variable Hales-Jewett theorem to an infinite alphabet is false (see, e.g., \cite[Remark 2.38]{T}), for our purposes we only need a weak version of such a result which we will prove in this section (Theorem \ref{thm5}). To do so, we make use of the theory of idempotent ultrafilters; a brief overview is included below, but we refer the reader to the first few sections of \cite[Chapter 2]{T} for more details.

Suppose $L = \bigcup_{n<\omega} L_n$ is an infinite alphabet given as an increasing chain of finite subsets $L_n$. Let $S = W_L \cup W_{Lv}$ and consider the semigroup $(S, \, ^\frown)$ and its extension $(S^*, \, ^\frown)$, where $S^* = \beta S \setminus S$ and where $\beta S$ is the Stone-\v{C}ech compactification of $(S, \, ^\frown)$. We view $S^*$ as the collection of all non-principal ultrafilters on $S$. Each $\U \in S^*$ corresponds to an \emph{ultrafilter quantifier} in the following way: Given a first-order formula $\varphi(x)$ with a free variable $x$ ranging over elements of $S$, we write $$(\U x) \, \varphi(x) \text{ iff } \{x \in S : \varphi(x)\} \in \U.$$ It is easy to check that ultrafilter quantifiers commute with conjunction and negation of first-order formulas. Using these quantifiers, the extensions of the operations of concatenation and substitution to $S^*$ can be characterized as follows: For $A \subseteq S$ and $\lambda \in L$, $$A \in \U^\frown \V \text{ iff } (\U x)(\V y) \, x^\frown y \in A,$$ $$A \in \U[\lambda] \text{ iff } (\U x) \, x[\lambda] \in A.$$ An ultrafilter $\U \in S^*$ is \emph{idempotent} if $\U ^\frown \U = \U$. Given two idempotent ultrafilters $\U, \V \in S^*$, we write $\U \leq \V$ whenever $$\U ^\frown \V = \V ^\frown \U = \U.$$ Finally, we say that an idempotent ultrafilter $\U \in I \subseteq S^*$ is \emph{minimal} if it is minimal in $I$ with respect to the partial order $\leq$. We will make use of the following standard facts about minimal idempotents in compact semigroups, which we state in our specific context; the reader is referred to \cite[Chapter 2.1]{T} for the general proofs, and to \cite{BBH} or \cite{FK} for more details on the use of idempotent ultrafilters in Ramsey theory.

Below, a subset $I$ of $S^*$ is a \emph{left-ideal} if $I$ is non-empty and $\U ^\frown \V \in I$ for every $\U \in S^*$ and every $\V \in I$. The notions of right-ideal and two-sided ideal are defined similarly.

\begin{lem}
\begin{enumerate}
	\item Every closed subsemigroup of $S^*$ contains a minimal idempotent.
	\item For every two-sided ideal $I \subseteq S^*$ and every idempotent $\U$, there is an idempotent $\V \in I$ such that $\V \leq \U$.
	\item For every minimal idempotent $\V$ and every right-ideal $J \subseteq S^*$, there is an idempotent $\U \in J$ such that $\V^\frown \U = \V$ and $\U ^\frown \V = \U$.
\end{enumerate}
\end{lem}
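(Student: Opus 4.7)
These three statements are classical facts about compact right-topological semigroups, applied here to $(S^*, {}^\frown)$; the only topological input is that each right-translation $\rho_\V : \W \mapsto \W ^\frown \V$ is continuous (immediate from $A \in \W ^\frown \V \iff (\W x)(\V y)\, x ^\frown y \in A$), whereas left-translations $\lambda_\V$ are in general not continuous, so every compactness argument below must be staged on a set of the form $S^* ^\frown \V$ or $\rho_\V(T)$. For (1), I start with Ellis--Numakura: Zorn's lemma on the non-empty closed subsemigroups of $T$ under reverse inclusion (chain intersections are non-empty by compactness and closed under $^\frown$) yields a minimal element $M$; for any $x \in M$, the set $M^\frown x = \rho_x(M)$ is a closed subsemigroup of $M$ and so equals $M$, and then $\{y \in M : y^\frown x = x\}$ is a closed subsemigroup of $M$ equal to $M$, giving $x^\frown x = x$. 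To upgrade this to a \emph{minimal} idempotent, apply Zorn a second time to the closed left ideals of $T$ (closed under intersection, with principal examples $\rho_\W(T)$) to obtain a minimal closed left ideal $L$, extract an idempotent $\V \in L$ via the first step, and verify minimality: if $\V' \leq \V$ is idempotent in $T$, then $\V' = \V'^\frown \V \in T^\frown\V \subseteq L$, so $T^\frown\V'$ is a closed left ideal of $T$ contained in $L$, equal to $L$ by minimality; writing $\V = W^\frown\V'$ then gives $\V^\frown\V' = \V$, which combined with $\V^\frown\V' = \V'$ forces $\V = \V'$.

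For (2), I would pick any $\A \in I$ and consider $L := \rho_{\A^\frown\U}(S^*) = S^* ^\frown \A ^\frown \U$. This set is closed, is a left ideal of $S^*$, and is contained in $I$ because $I$ is two-sided; it is moreover contained in $S^* ^\frown \U$, so every $\V_0 \in L$ satisfies $\V_0 ^\frown \U = \V_0$. Applying (1) to the closed subsemigroup $L$ produces an idempotent $\V_0 \in L$, and the correction $\V := \U ^\frown \V_0$ does the job: direct computation using $\V_0 ^\frown \U = \V_0$, $\U ^\frown \U = \U$ and the idempotence of $\V_0$ shows $\V ^\frown \V = \V$, $\V \in S^* ^\frown L \subseteq L \subseteq I$, and $\U ^\frown \V = \V ^\frown \U = \V$, so $\V \in I$ and $\V \leq \U$.

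For (3), set $L := S^* ^\frown \V$. Minimality of $\V$ implies that $L$ is itself a minimal closed left ideal: given a non-empty closed left ideal $L' \subseteq L$, (1) produces an idempotent $f \in L'$; then $g := \V ^\frown f$ is an idempotent satisfying $g \leq \V$ (quick computation), so $g = \V$ by minimality of $\V$, forcing $\V \in L'$ and hence $L' = L$. Any idempotent $\U \in L$ then automatically satisfies both required identities: $\U \in L$ gives $\U = B ^\frown \V$ and so $\U ^\frown \V = \U$, while the minimality of $L$ gives $S^* ^\frown \U = L \ni \V$, so $\V = C ^\frown \U$ and $\V ^\frown \U = \V$. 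It therefore suffices to find an idempotent in $L \cap J$. For this, pick $\A \in J$ and note that $\A ^\frown \V \in J \cap L \cap K(S^*)$, where $K(S^*)$ is the minimal two-sided ideal; hence $\A ^\frown \V$ lies in some minimal right ideal $R$ of $S^*$, and since $R \cap (\A ^\frown S^*)$ is a non-empty right ideal contained in $R$, minimality of $R$ forces $R \subseteq \A ^\frown S^* \subseteq J$. The standard structural fact that $R \cap L$ is a group — itself obtainable from (1) applied inside a closed subsemigroup that exploits the two-sided minimality of $R$ and $L$ — then produces a unique idempotent $\U \in R \cap L \subseteq J \cap L$, completing the proof.

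The principal obstacle throughout is the asymmetry of continuity in $S^*$: sets such as $\V ^\frown S^*$, $\U ^\frown I$, or an arbitrary minimal right ideal $R$ may fail to be closed, so the "obvious" subsemigroup one would like to feed into the Zorn step is often unavailable. The workaround in (2) is to extract $\V_0$ from the closed set $\rho_{\A^\frown\U}(S^*)$ and then correct after the fact by left-multiplying with $\U$; in (3) the analogous correction is implicit in the passage from the closed minimal left ideal $L$ to the group $R \cap L$, and it is precisely this group structure — rather than any direct application of (1) to $J \cap L$ — that guarantees the required idempotent actually lies in the a priori non-closed right ideal $J$.
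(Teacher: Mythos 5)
Your treatments of (1) and (2) are correct and are essentially the standard arguments (the paper does not prove this lemma itself but defers to \cite[Chapter 2.1]{T}, where these proofs appear): Ellis--Numakura via Zorn on closed subsemigroups, minimality of the idempotent extracted from a minimal closed left ideal, and, for (2), pulling an idempotent $\V_0$ out of the closed left ideal $S^*\,^\frown\A^\frown\U\subseteq I$ and correcting to $\V=\U^\frown\V_0$; note your argument does not even need $I$ to be closed, which is slightly stronger than what some references state. Your reduction in (3) is also correct and is the right observation: $L=S^*\,^\frown\V$ is a minimal closed left ideal, and \emph{any} idempotent of $L$ automatically satisfies both $\U^\frown\V=\U$ and $\V^\frown\U=\V$, so everything hinges on producing an idempotent in $L\cap\A^\frown S^*$ for a fixed $\A\in J$. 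The gap is precisely there: you delegate this to the structure theorem for the smallest ideal $K(S^*)$ --- that $\A^\frown\V$ lies in a minimal right ideal $R$ and that $R\cap L$ is a group --- and you say the group structure is ``obtainable from (1) applied inside a closed subsemigroup.'' That indication fails for exactly the reason you flag at the outset: since left translations are not continuous, minimal right ideals need not be closed, $R\cap L$ need not be closed, and (1) cannot be applied to it. Even the existence of minimal right ideals is not free in this setting. As written, the hardest step of (3) is cited rather than proved.

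The gap can be closed without invoking $K(S^*)$ at all. First note that $L$ is minimal among \emph{all} left ideals: if $L'\subseteq L$ is a nonempty left ideal and $x\in L'$, then $S^*\,^\frown x$ is a \emph{closed} left ideal contained in $L'$, hence equals $L$ by your minimality. Consequently $H:=\{\X\in L:\V^\frown\X=\X\}$ is a group with identity $\V$: it is a subsemigroup of $L$; every $\X\in L$ already satisfies $\X^\frown\V=\X$; and for $x\in H$ one has $\V\in L=S^*\,^\frown x$, say $\V=y^\frown x$, whence $w:=\V^\frown y^\frown\V\in H$ satisfies $w^\frown x=\V^\frown y^\frown x=\V$, so every element has a left inverse. (This $H$ is your $R\cap L$ for $R=\V^\frown S^*$, but its group structure is obtained algebraically from minimality of $L$, not from another Ellis--Numakura step.) Now set $\mathcal{B}:=\V^\frown\A^\frown\V\in H$, let $\mathcal{B}^{-1}$ be its inverse in $H$, and put $\U:=\A^\frown\mathcal{B}^{-1}$. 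Then $\U\in\A^\frown S^*\subseteq J$ and $\U\in S^*\,^\frown\V=L$, and since $\mathcal{B}^{-1}=\V^\frown\mathcal{B}^{-1}={\mathcal{B}^{-1}}^\frown\V$ we get
$$\U^\frown\U=\A^\frown\bigl({\mathcal{B}^{-1}}^\frown\V^\frown\A^\frown\V^\frown\mathcal{B}^{-1}\bigr)=\A^\frown\bigl({\mathcal{B}^{-1}}^\frown\mathcal{B}^\frown\mathcal{B}^{-1}\bigr)=\A^\frown\mathcal{B}^{-1}=\U,$$
so $\U$ is an idempotent of $L\cap J$, and your own reduction finishes the proof.
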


For a sequence $X = (x_n)_{n<\omega}$ in $W_{Lv}$, let $[X]_L^*$ denote the set of all words in $W_L$ of the form $${x_0[\lambda_0]}^\frown \dots ^\frown{x_k}[\lambda_k]$$ where  $k < \omega$ and $\lambda_i \in L_i$ for each $i \leq k$. We then have the following version of the left-variable Hales-Jewett theorem for infinite alphabets.

\begin{thm}\label{thm5}
For every finite colouring of $W_L$ there is a variable-free word $w_0$ together with an infinite sequence $X = (x_n)_{n<\omega}$ of left-variable words such that the translate ${w_0}^\frown [X]_L^*$ is monochromatic.
\end{thm}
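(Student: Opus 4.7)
The plan is to adapt the ultrafilter-theoretic proof of the finite left-variable Hales-Jewett theorem (Theorem 2.8), with the key modification being that at stage $n$ of the recursion we only need to accommodate the finite sub-alphabet $L_n$.

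First, let $\V$ be a minimal idempotent in the closed subsemigroup $\I_{\mathrm{lv}} \subseteq S^*$ of ultrafilters concentrated on left-variable words; this exists by part (1) of the Lemma, since left-variable words form a subsemigroup of $S$ (concatenating two left-variable words produces another). The substitution map $\U \mapsto \U[\lambda]$ is a continuous semigroup homomorphism of $\beta S$, so each $\V[\lambda]$ is an idempotent of the closed subsemigroup $\I_w \subseteq S^*$ of ultrafilters concentrated on $W_L$, and by minimality of $\V$ each $\V[\lambda]$ lies in the minimal two-sided ideal of $\I_w$. Using parts (2) and (3) of the Lemma, I would extract an idempotent $\W \in \I_w$ satisfying $\W^\frown \V[\lambda] = \W$ for every $\lambda \in L$, and then pick the unique color $i^* < r$ with $c^{-1}(i^*) \in \W$, where $c$ denotes the given finite coloring of $W_L$.

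Next I would construct $w_0$ and $(x_n)_{n<\omega}$ by a standard Galvin-Glazer diagonal argument. The compatibility $\W^\frown \V[\lambda] = \W$ yields $\{w \in W_L : (\V x) \, c(w^\frown x[\lambda]) = i^*\} \in \W$ for each $\lambda \in L$; intersecting over the finite set $L_0$, I would pick $w_0 \in c^{-1}(i^*)$ lying in this intersection. Having chosen $w_0, x_0, \ldots, x_{n-1}$, for each of the finitely many prefix words $u$ of the form $w_0^\frown x_0[\lambda_0]^\frown \cdots ^\frown x_{n-1}[\lambda_{n-1}]$ with $\lambda_i \in L_i$, and each $\lambda \in L_n$, the set $\{x \in W_{Lv} : x \text{ is left-variable and } c(u^\frown x[\lambda]) = i^*\}$ would be in $\V$ by a recursive invariant maintained using idempotency of $\V$. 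I would then pick $x_n$ from the finite intersection of these sets, further intersected with a suitable tail-set that preserves the invariant for stage $n+1$. By construction, $c(w_0^\frown x_0[\lambda_0]^\frown \cdots ^\frown x_n[\lambda_n]) = i^*$ for every admissible choice of $\lambda_i \in L_i$.

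The hard part will be producing the idempotent $\W$ absorbing every $\V[\lambda]$ from the right \emph{simultaneously} for all $\lambda \in L$. For finite $L$ this is routine, but for infinite $L$ one must exhibit all the substitution idempotents $\V[\lambda]$ inside a single minimal left-ideal of $\I_w$, so that any minimal idempotent $\W$ in that left-ideal automatically satisfies $\W^\frown \V[\lambda] = \W$ (idempotents in the same minimal left-ideal being mutually left-absorbing in the sense that $e_1^\frown e_2 = e_1$). This step is where the minimality of the originally chosen $\V$ is used in an essential way. Once $\W$ and $i^*$ are secured, the remainder of the proof is a routine diagonalization that exploits the finiteness of each $L_n$ at stage $n$ in place of the finiteness of $L$ used in the classical argument.
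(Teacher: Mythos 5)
Your overall strategy is the right one (a Galvin--Glazer style recursion in $\beta S$, exploiting the finiteness of each $L_n$ at stage $n$), and you have correctly located the crux: one needs a single idempotent $\W$ concentrated on $W_L$ that is absorbed by \emph{every} substitution image $\V[\lambda]$, $\lambda\in L$, simultaneously. But the proposal does not actually deliver this $\W$, and as set up it cannot. You choose $\V$ first, as a minimal idempotent of the subsemigroup of ultrafilters concentrating on left-variable words, and then assert that the $\V[\lambda]$ all lie in one minimal left ideal of $\I_w$ and invoke parts (2) and (3) of the Lemma to get $\W$. Neither step is justified: (2) and (3) only apply to two-sided ideals and right ideals of $S^*$, whereas $\I_w=\{\X : W_L\in\X\}$ is merely a closed subsemigroup (indeed it is disjoint from the smallest ideal of $S^*$, which concentrates on $W_{Lv}$); and while each $\V[\lambda]$ is indeed a minimal idempotent of $\I_w$ (being the image of a minimal idempotent under the surjective homomorphism $\X\mapsto\X[\lambda]$ onto $\I_w$), minimal idempotents of a compact semigroup populate a great many distinct minimal left ideals, and minimality of $\V$ gives no control over which ones the various $\V[\lambda]$ fall into. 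Computing $\V[\mu]^\frown\V[\lambda]$ via ultrafilter quantifiers shows there is no formal reason for it to equal $\V[\mu]$, so the ``mutually absorbing'' identity you need is simply not available from your choices.

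The paper's proof closes exactly this gap by reversing the order of the choices. One first fixes $\W$ as a minimal idempotent of the subsemigroup $\I_w$ itself, then chooses $\V\leq\W$ minimal in the two-sided ideal $\{\X : W_{Lv}\in\X\}$, and finally an idempotent $\U$ in the right ideal of ultrafilters concentrating on $v^\frown W_{Lv}$ with $\V^\frown\U=\V$ and $\U^\frown\V=\U$. Applying the homomorphism $\X\mapsto\X[\lambda]$ to $\V\leq\W$ and using that substitution is the identity on $W_L$ gives $\V[\lambda]\leq\W[\lambda]=\W$, whence $\V[\lambda]=\W$ for \emph{every} $\lambda\in L$ by minimality of $\W$ in $\I_w$; this single identity then yields both absorption laws $\W^\frown\U[\lambda]=\W$ and $\U[\lambda]^\frown\W=\U[\lambda]$ for all $\lambda$ at once, which is all the recursion needs. (A secondary point: the paper interleaves variable-free words $w_{n+1}$ after each $x_n[\lambda_n]$, precisely so that the recursion only ever uses products of the form $\W^\frown\U[\lambda]$ and $\U[\lambda]^\frown\W$ and never products $\U[\lambda]^\frown\U[\mu]$ of two substitution idempotents; your sketch, which concatenates the $x_n[\lambda_n]$ directly, would need such products and should be repaired by inserting the $w_n$ and setting $x_n'=x_n^\frown w_{n+1}$ at the end.) I would encourage you to rework the argument with $\W$ chosen first; once $\V[\lambda]=\W$ is in hand, the rest of your diagonalization goes through essentially as you describe.
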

\begin{proof}
We follow the proof of \cite[Theorem 2.37]{T}. Using the previous lemma, let $\W$ be a minimal idempotent belonging to the closed subsemigroup $\{\X \in S^* : W_L \in \X\}$ and let $\V \leq \W$ be a minimal idempotent belonging to the two-sided ideal $\{\X \in S^* : W_{Lv} \in \X\}$. Since $v\,^\frown W_{Lv}$ is a right-ideal of $S$, the collection $$J = \{\X \in S^*: v\,^\frown W_{Lv} \in \X\}$$ is a right-ideal of $S^*$ and so we can choose an idempotent $\U \in J$ such that $$\V^\frown \U = \V \, \text{ and }\, \U^\frown \V = \U.$$ Note that since each substitution mapping $\X \mapsto \X[\lambda]$ $(\lambda \in L)$ is a homomorphism, it follows that each ultrafilter $\V[\lambda]$ is an idempotent of $S^*$ such that $$\V[\lambda] \leq \W[\lambda] = \W,$$ where the equality comes from the fact that the mapping $x \mapsto x[\lambda]$ is the identity on $W_L$. Thus $\V[\lambda] = \W$ by minimality of $\W$; in particular, this implies $$\W^\frown \U[\lambda] = \W \, \text{ and } \, \U[\lambda]^\frown \W = \U[\lambda]$$ for all $\lambda \in L$.

Let $P$ be the colour of the given colouring which belongs to $\W$. By recursion on $n < \omega$, we will build a sequence $(x_n)$ of left-variable words from $W_{Lv}$ and a sequence $(w_n)$ of variable-free words from $W_L$ such that $${w_0}^\frown{x_0}[\lambda_0]^\frown{w_1}^\frown{x_1}[\lambda_1]^\frown \dots ^\frown {x_{k-1}}[\lambda_k]^\frown{w_k} \in P$$ for all $k < \omega$ and $\lambda_i \in L_i, i \leq k$. Then taking $x_n' = {x_n}^\frown w_{n+1}$ for each $n$ will give the required sequence $X$ of left-variable words.

For arbitrary $Q \subseteq W_L, w \in W_L$ and $n < \omega$, define $$Q / w = \{t \in W_L : w^\frown t \in Q\}$$ and $$\partial_n Q = \{w \in Q : Q/w \in \W  \, \text{ and } \, Q/w \in \U[\lambda] \text{ for all $\lambda \in L_n$}\}.$$ Using the equations $\W^\frown \W = \W$ and $\W ^\frown \U[\lambda] = \W$ for all $\lambda \in L$, we see that $\partial_n Q \in \W$ whenever $Q \in \W$ since each $L_n$ is finite.

To start the recursive construction, we use the fact that $P \in \W$ to get $\partial_0 P \in \W$; in particular $\partial_0 P$ is non-empty and so there is $w_0 \in W_L$ such that $$(\U x)(\forall \lambda \in L_0) \, x[\lambda] \in P/w_0.$$ Combining this with the fact that $\U[\lambda]^\frown \W = \U[\lambda]$ for all $\lambda \in L$, we can find a left-variable word $x_0$ from $W_{Lv}$ (since $\U$ concentrates on $v\,^\frown W_{Lv}$) and sets $P_{0,\lambda} \in \W \, \, (\lambda \in L_0)$ such that  $x_0[\lambda] \in P/w_0$ and $x_0[\lambda]^\frown w \in P/w_0$ for all $w \in P_{0,\lambda}$. Let $$P_0 = \bigcap\{P_{0,\lambda} : \lambda \in L_0\}.$$ Since $P_0 \in \W$, we also have $\partial_1 P_0 \in \W$; in particular $\partial_1 P_0$ is non-empty and so there is $w_1 \in W_L$ such that, for all $\lambda \in L_0$, $$P/({w_0} ^\frown x_0[\lambda]^\frown w_1) \in \W \, \text{ and } \, (\forall \mu \in L_1)\, P/({w_0}^\frown x_0[\lambda] ^\frown w_1) \in \U[\mu].$$ Rewriting this fact in terms of $\U$, this means $$(\U x)(\forall \mu \in L_1)(\forall \lambda \in L_0) \, x[\mu] \in P/({w_0}^\frown x_0[\lambda] ^\frown w_1)$$ and so, combining this with the fact that $\U[\lambda]^\frown \W = \U[\lambda]$ for all $\lambda \in L$, we can find a left-variable word $x_1$ in $W_{Lv}$ and sets $P_{1,\mu} \in \W \, \, (\mu \in L_1)$ such that $x_1[\mu] \in P/({w_0}^\frown x_0[\lambda] ^\frown w_1)$ and $x_1[\mu]^\frown w \in P/({w_0}^\frown x_0[\lambda] ^\frown w_1)$ for all $w \in P_{1,\mu}$ and for all $\lambda \in L_0$. Now let $$P_1 = \bigcap \{P_{1,\mu} : \mu \in L_1\}$$ and continue inductively to construct the required sequences $(x_n)$ and $(w_n)$.
\end{proof}

\section{$\P$-alternating equivalence relations for infinite partitions of $\omega$}

In this section we will prove an analogue of Theorem \ref{thm1} in the case where $\omega$ is partitioned into infinitely many pieces. Let $\P = \{P_n : n<\omega\}$ be a partition of $\omega$ into infinite sets such that $\min P_i < \min P_j$ whenever $i < j$. In what follows, we use the same notation as in the case where $\P$ is a finite partition of $\omega$, as it will be clear from context which partition $\P$ (and hence which space of equivalence relations) we are considering. Let $\E^\P_\infty$ denote the set of all equivalence relations $E$ on $\omega$ such that:
\begin{enumerate}
	\item[(a)] $E$ has infinitely many equivalence classes.
	\item[(b)] If $(p_k(E))_{k<\omega}$ is an increasing enumeration of $p(E)$, then $p_k(E) \in P_n$ if and only if there is $m < \omega$ such that $k = 2^n + 2^{n+1}\cdot m - 1$. Equivalently, $p_k(E) \in P_n$ if and only if $n<\omega$ is maximal such that $2^n$ divides $k+1$. (The sequence defined by this condition corresponds to sequence number A007814 in the On-Line Encyclopedia of Integer Sequences.)
\end{enumerate}
Note that condition (b) implies that each $E \in \E^\P_\infty$ has infinitely many $P_n$-classes for each $n < \omega$. (Recall that these are equivalence classes $X$ such that $\min X \in P_n$.) As before, we will say that such an equivalence relation is \emph{$\P$-alternating}. Letting $p_k = p_k(E)$, condition (b) merely states that the first few minimal representatives of $E$ must satisfy $$p_0, p_2, p_4, \dots \in P_0,$$ $$p_1, p_5, p_9, \dots \in P_1,$$ $$p_3, p_{11}, p_{19}, \dots \in P_2,$$ $$\vdots$$ and so on, so that in general we have $p_k \in P_{\sigma(k)}$ where $\sigma$ is the sequence defined by the condition in (b), i.e. $\sigma(k)$ is the largest power of $2$ which divides $k+1$. Note that condition (b) implies that for any $q < \omega$ and any two minimal representatives $p_n, p_m \in P_q$ such that $n > m$, the difference $n - m$ is always a multiple of $2^{q+1}$. Also, any interval $I \subseteq \omega$ of length at least $2^{q+1}$ contains an integer $n$ such that $p_n \in P_q$.

\begin{thm}\label{thm6}
$(\E^\P_\infty, \leq, r)$ is a topological Ramsey space.
\end{thm}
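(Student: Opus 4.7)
The plan is to parallel the proof of Theorem \ref{thm1}, replacing the finite left-variable Hales--Jewett theorem with its infinite-alphabet counterpart, Theorem \ref{thm5}. Axioms A.1--A.3 are verified routinely as in the finite-partition case, so the main work is axiom A.4.

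Fix a nonempty basic set $[a, E]$ with $a = r_n(E)$ (reducing to this case by A.3(2)) and a set $\O \subseteq (\AE^\P_\infty)_{n+1}$. An end-extension $b \in r_{n+1}[a, E]$ has domain $\{0, \ldots, p_m(E) - 1\}$ for some $m > n$, and condition (b) forces $p_m(E)$ to lie in the same piece of $\P$ as $p_{n+1}(E)$. Letting $s$ be the unique integer such that $2^s$ divides $n+2$ but $2^{s+1}$ does not, the admissible values of $m$ form the arithmetic progression $\{n + 1 + k \cdot 2^{s+1} : k \geq 0\}$. Such an end-extension $b$ is then specified by a function $\{n+1, \ldots, m-1\} \to \{0, \ldots, n\}$ indicating, for each ``discarded'' position $j$, into which of the $n+1$ existing classes the class $p_j(E)$ gets merged.

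The reason we need Theorem \ref{thm5} (rather than its finite counterpart) is that the alternating pattern dictated by condition (b) is \emph{not} periodic: in the finite-partition case, a single block size $l$ works uniformly, yielding a finite alphabet of size $(n+1)^l$; here, as we iteratively build $F$, the required block sizes between successive kept positions of $F$ must grow without bound in order to match the unbounded 2-adic valuations arising from condition (b). Accordingly, we set up an alphabet $L = \bigcup_i L_i$ so that each letter in $L_i$ encodes the merges for a block of $E$-positions associated to the $i$-th substitution step, with $|L_i|$ growing with $i$ to accommodate the necessary block sizes (a natural choice being $L_i = \{0, \ldots, n\}^{c_i}$ for block sizes $c_i$ compatible with the 2-adic constraint at step $i$).

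With this encoding, every $w \in W_L$ gives a well-defined $b(w) \in r_{n+1}[a, E]$. Define $c : W_L \to 2$ by $c(w) = 0$ iff $b(w) \in \O$ and apply Theorem \ref{thm5} to obtain a variable-free word $w_0$ and a sequence $X = (x_i)$ of left-variable words such that $w_0{}^\frown [X]_L^*$ is $c$-monochromatic. As in the proof of Theorem \ref{thm1}, convert $w_0$ to a word $u_0$ and each $x_i$ to a variable word $y_i$ in the alphabet $\{0, \ldots, n\}$, placing variables within each $y_i$ at the position dictated by the 2-adic constraint so that the induced new minimal representative falls in the correct piece of $\P$. Form $y = u_0{}^\frown y_0{}^\frown y_1{}^\frown \cdots$ and use $y$ to define $F$ in the standard way. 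The main obstacle will be the combinatorial bookkeeping required to choose the block sizes $c_i$ and variable placements consistently, so that (i) each $L_i$ is finite, (ii) $F$ is $\P$-alternating, and (iii) every $b \in r_{n+1}[a, F]$ arises as $b(w)$ for some $w \in w_0{}^\frown [X]_L^*$, yielding the A.4 conclusion. The 2-adic structure of the alternating pattern makes this bookkeeping more delicate than the modular pattern of the finite case, but the increasing-alphabet flexibility of Theorem \ref{thm5} is precisely what accommodates the unbounded growth of the required block sizes.
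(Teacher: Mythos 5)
Your plan is the paper's plan: reduce to A.4, code end-extensions of $E$ by words over an increasing chain of finite alphabets, apply Theorem \ref{thm5}, and translate $w_0$ and the $x_i$ back into an infinite word over $\{0,\dots,n\}$ that defines $F$. Your analysis of the admissible domains (the arithmetic progression $n+1+k\cdot 2^{s+1}$ with $2^s\parallel n+2$) is also correct. The problem is that you stop exactly where the proof actually lives: you name the choice of block sizes $c_i$ and the variable placement as ``the main obstacle'' and leave conditions (i)--(iii) unverified. That is not routine bookkeeping that can be waved at; it is the step that distinguishes this theorem from Theorem \ref{thm1}, and a proof must exhibit a concrete choice and check it.

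Here is what the missing construction has to do, and how the paper does it. Let $q_i$ be defined by $p_{n+1+i}(E)\in P_{q_i}$ and set $t_0=2^{q_0+1}$, $t_i=t_{i-1}\cdot 2^{q_i+1}$, with $L_i=L_{i-1}\cup (n+1)^{t_i}$ (note the alphabets must be \emph{nested}, as Theorem \ref{thm5} requires $L=\bigcup_i L_i$ to be an increasing chain; your $L_i=\{0,\dots,n\}^{c_i}$ as written are pairwise disjoint, so $\lambda_i\in L_i$ in $[X]_L^*$ must be allowed to have any domain $t_j$, $j\le i$, and the decoding has to handle that). Two properties of this choice are doing the work. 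First, every $t_i$ is a multiple of $t_0$, so each letter of $L_i$ decomposes into $L_0$-blocks and every word of $W_L$ decodes to an end-extension whose domain is $p_{n+1+t_0\lambda'}(E)$ with $t_0\lambda'\equiv 0 \pmod{2^{q_0+1}}$, i.e.\ to a \emph{legal} member of $r_{n+1}[a,E]$. Second, $t_i\ge 2^{q_i+1}$, which by the periodicity of $\sigma$ guarantees that every length-$t_i$ window of indices contains a position $s$ with $q_s=q_i$; each occurrence of $v$ in $x_i$ is expanded to a length-$t_i$ string $0\cdots 0v0\cdots 0$ with the $v$ at the least such $s$, so that the new class created at step $i$ has its minimal representative in $P_{q_i}$ and $F$ is $\P$-alternating. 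Only with both properties in hand can one check (iii), namely that every $b\in r_{n+1}[a,F]$ decodes as $b(w)$ for some $w\in {w_0}^\frown[X]_L^*$ with $\lambda_i\in L_i$. Your proposal correctly anticipates all of this qualitatively, but without the explicit $t_i$ and the divisibility argument it is an outline rather than a proof.
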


\begin{proof}
It is enough to check axiom A.4. Let $[a, E] \neq \emptyset$ be a basic set, $n = |a|$ and $\O \subseteq (\AE^\P_\infty)_{n+1}$. We can assume $a = r_n(E)$ and we aim to find $F \in [a,E]$ such that $r_{n+1}[a,F] \subseteq \O$ or $r_{n+1}[a,F] \subseteq \O^c$. As before, consider an arbitrary end-extension $b \in r_{n+1}[a,E]$. Then $b$ is an equivalence relation on a set of the form $$p_m(E) = \{0, 1, \dots, p_m(E) - 1\}$$ for some $m > n$, such that $b$ has one more equivalence class with minimal representative $p_n(E)$. Thus $b$ joins the classes of $E$ with minimal representatives among $p_{n+1}(E), \dots, p_{m-1}(E)$ to a class with minimal representative $\leq p_n(E)$. Suppose $p_m(E) \in P_q$ (and note that $q$ only depends on $n$). By our condition for being a member of $\E^\P_\infty$, there is $\lambda <\omega$ such that $m = n + 1 + 2^{q+1}\cdot \lambda$.

Let $(q_i)_{i < \omega}$ be the sequence satisfying $p_{n+1+i} \in P_{q_i}$ for all $i$; in particular $q_0 = q$. Define a sequence $(t_i)_{i<\omega}$ recursively by $$t_0 = 2^{q_0 + 1},$$ $$t_i = t_{i-1} \cdot 2^{q_i + 1} \, \, \, \, \, \, (i > 0)$$ and use this to define a nested sequence of finite alphabets $L_i$ as follows: $$L_0 = (n+1)^{t_0},$$ $$L_i = L_{i-1} \cup (n+1)^{t_i} \, \, \, \, \, \, (i>0).$$ Then let $L = \bigcup_{i<\omega} L_i$. Each $b \in r_{n+1}[a,E]$ of the above form can be coded by a word $w^b \in W_L$ of length $\lambda$, by associating each block of the form $$\left(p_{n+i\cdot t_0 + 1}(E), p_{n+i\cdot t_0+2}(E), \dots, p_{n+(i+1)\cdot t_0}(E)\right), \, \, \,  i < \lambda$$ to the letter $$(k_0, \dots, k_{t_0-1}) \in L_0$$ where $p_{k_j}(E)$ is joined to $p_{n+1+i\cdot t_0+j}(E)$.

Conversely, any word $w \in W_L$ determines an end-extension $b(w) \in r_{n+1}[a,E]$ as follows: First, for each $i < \omega$ let $m_i$ be chosen such that $t_i = t_0 \cdot m_i$. Then each letter $l \in (n+1)^{t_{i(l)}}$ of $w$ determines a word $z_l \in W_{L_0}$ of length $m_{i(l)}$, where the $j^{\mathrm{th}}$ letter of $z_l$ is the sequence $$\left(\pi_{j\cdot t_0}(l), \pi_{j\cdot t_0 + 1}(l), \dots, \pi_{j\cdot t_0 + (t_0 - 1)}(l)\right).$$ By concatenating all such words $z_l$, $w$ determines a word $z \in W_{L_0}$ of length $$\lambda' = \sum_{\text{ $l$ is a letter of $w$} } m_{i(l)}.$$ Now, for each $i < \lambda'$ and $j < t_0$, join the class $p_{n+1+i \cdot t_0 +j}(E)$ to $p_{\pi_j(z(i))}(E)$ and let $b(w)$ be the corresponding equivalence relation on the set $p_m(E)$, where $m = n+1+ t_0 \cdot \lambda'$.

Define a colouring $c : W_L \rightarrow 2$ by setting $c(w) = 0$ if and only if $b(w) \in \O$, and apply Theorem \ref{thm5} to obtain a variable-free word $w_0$ together with a sequence $X = (x_i)_{i<\omega}$ of left-variable words such that the translate ${w_0}^\frown[X]_L^*$ is monochromatic for $c$. In other words, either
\begin{enumerate}
\item $b(w) \in \O$ for every $w \in {w_0}^\frown[X]_L^*$, or
\item $b(w) \not \in \O$ for every $w \in {w_0}^\frown[X]_L^*$.
\end{enumerate}
For each letter $l \in L$, write $\dom(l)$ for the domain of $l$, i.e. for the unique integer $t_j$ such that $l \in (n+1)^{t_j}$. For each occurrence of $v$ in $x_i$ for some $i < \omega$, then we set $\dom(v) = t_i$ (note that the domain depends on $i$). Define a variable-free word $u_0$ together with a sequence of variable words $(y_i)_{i < \omega}$ as follows: First, let $u_0$ be the variable-free word of length $$\sum_{j<|w_0|} \dom(w_0(j))$$ in the alphabet $\{0,\dots, n\}$ obtained by replacing each letter $(k_1, \dots, k_{t_j}) \in (n+1)^{t_j}$ from $w_0$ with the string $$k_1 \dots  k_{t_j}.$$ Then, assuming we have defined $u_0$ and $y_0, \dots, y_{i-1}$, let $y_i$ be the variable word of length $$\sum_{j<|x_i|} \dom(x_i(j))$$ in the alphabet $\{0, \dots, n\}$ which is obtained as follows:
\begin{enumerate}[(i)]
	\item Replace each letter $(k_1, \dots, k_{t_j}) \in (n+1)^{t_j}$ in $x_i$ with the string $$k_1  \dots  k_{t_j}.$$
	\item Replace the left-most variable $v$ in $x_i$ with the string $$0 \dots  0 v  0 \dots 0$$ of length $t_i$, where $v$ occurs in the least place $N < t_i$ such that $q_s$ is equal to $q_i$, where $$s = |u_0| + |y_0| + \dots + |y_{i-1}| + N.$$ Note that such an $N$ exists since $t_i \geq 2^{q_i + 1}$ and so the interval $$[|u_0| + |y_0| + \dots + |y_{i-1}|, |u_0| + |y_0| + \dots + |y_{i-1}|  + t_i)$$ must contain an index $s$ such that $q_s = q_i$ by definition of the sequence $\sigma$.
	\item Assume inductively that we have defined the first $j$ letters of $y_i$, and consider the least occurrence of the variable $v$ in $x_i$ which has not been replaced. Then replace $v$ with the string $$0 \dots  0 v 0 \dots  0$$ of length $t_i$, where $v$ occurs in the least place $N < t_i$ such that $q_s$ is equal to $q_i$, where $$s = |u_0| + |y_0| + \dots + |y_{i-1}| + j + N.$$ As before, such $N$ exists since the interval $$[|u_0| + |y_0| + \dots + |y_{i-1}| + j, |u_0| + |y_0| + \dots + |y_{i-1}| + j + t_i)$$ has length $t_i \geq 2^{q_i + 1}$.
\end{enumerate}
Now form the infinite word $$y = {u_0}^\frown{y_0}^\frown{y_1}^\frown \dots ^\frown{y_k}^\frown \dots$$ out of $u_0$ and $(y_i)_{i<\omega}$. To define $F$, it suffices to say how it acts on $p(E)$. If at place $i$ of $y$ we find a letter $k \in \{0,\dots,n\}$, we let $p_{n+1+i}(E)$ and $p_k(E)$ be $F$-equivalent. If we find a variable at places $$i, i' \in [|u_0| + |y_0| + \dots + |y_{j-1}|, |u_0| + |y_0| + \dots + |y_{j-1}| + |y_j|)$$ (where we set $y_{-1} = \emptyset$) then we let $p_{n+1+i}(E)$ and $p_{n+1+i'}(E)$ be $F$-equivalent, with no other connections. Then, by our placement of each of the variables, it follows that $F$ is $\P$-alternating. As in the proof of Theorem \ref{thm1}, $F$ satisfies the conclusion of A.4.
\end{proof}

\begin{cor}\label{cor6}
Suppose $c$ is a finite Souslin measurable colouring of $\E^\P_\infty$. Then for every $E \in \E^\P_\infty$ there is $F \leq E$ such that the family $\E^\P_\infty\restriction F$ of all coarsenings of $F$ is $c$-monochromatic.
\end{cor}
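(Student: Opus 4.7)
The plan is to derive this corollary immediately from Theorem \ref{thm6} together with the general consequence of the Abstract Ellentuck Theorem stated near the end of the Preliminaries section. By Theorem \ref{thm6} the triple $(\E^\P_\infty, \leq, r)$ is a topological Ramsey space; in particular it is closed and satisfies axioms A.1--A.4. Hence that earlier corollary applies verbatim with $\R = \E^\P_\infty$: for every finite Souslin measurable colouring $c$ of $\E^\P_\infty$ and every $E \in \E^\P_\infty$, there exists $F \in \E^\P_\infty$ with $F \leq E$ such that the set $\{F' \in \E^\P_\infty : F' \leq F\}$ is $c$-monochromatic.

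To conclude, I would simply match the notation of the abstract framework with the concrete one used in Section 3. By the conventions established at the start of the section, $F' \leq F$ means precisely that $F'$ is a coarsening of $F$, and the symbol $\E^\P_\infty \restriction F$ is by definition the family of all such coarsenings lying in $\E^\P_\infty$. Thus $\{F' \in \E^\P_\infty : F' \leq F\} = \E^\P_\infty \restriction F$, and the $F$ produced above witnesses the conclusion of the corollary.

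There is no real obstacle here: all the work has already been done in Theorem \ref{thm6}, where axiom A.4 was verified via the infinite-alphabet left-variable Hales-Jewett theorem (Theorem \ref{thm5}), the other axioms being routine as noted in the proof of Theorem \ref{thm1}. The present statement is therefore a purely formal translation of the abstract Ramsey-theoretic consequence into the language of $\P$-alternating equivalence relations, entirely analogous to how Corollary \ref{cor1} follows from Theorem \ref{thm1} and Corollary \ref{cor2} follows from Theorem \ref{thm2}.
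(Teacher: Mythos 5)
Your proposal is correct and matches the paper's (implicit) argument exactly: the corollary is stated without proof precisely because it is the instantiation, for $\R = \E^\P_\infty$, of the general Souslin-measurable consequence of the Abstract Ellentuck Theorem recorded in the Preliminaries, made available by Theorem \ref{thm6}. Your identification of $\{F' \in \E^\P_\infty : F' \leq F\}$ with $\E^\P_\infty\restriction F$ is the only translation needed, and it is the same one used for Corollaries \ref{cor1} and \ref{cor2}.
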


We can also impose conditions on the elements of each class of an equivalence relation: Given a sequence $\I = (I_n)_{n<\omega}$ of subsets of $\omega$ such that $n \in I_n$ for all $n < \omega$, let $\E^{\P, \I}_\infty$ be the set of all $\P$-alternating equivalence relations on $\omega$ which also satisfy:
\begin{enumerate}
	\item[(c)$_\I$] If $X$ is a $P_n$-class, then $X \subseteq \bigcup \{ P_m : m \in I_n\}$.
\end{enumerate}
Modifying the proof of Theorem \ref{thm6} in the natural way (as in the proof of Theorem \ref{thm2}) yields the following results.

\begin{thm}\label{thm7}
$(\E^{\P, \I}_\infty, \leq, r)$ is a topological Ramsey space.
\end{thm}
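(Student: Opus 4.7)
The proof will follow the strategy used in Theorem \ref{thm6}, with a modification analogous to the one used in the proof of Theorem \ref{thm2} in order to enforce condition (c)$_\I$. The axioms A.1, A.2, A.3 are routine exactly as before, so I focus on A.4. Fix a nonempty basic set $[a,E]$ with $a = r_n(E)$, and $\O \subseteq (\AE^{\P,\I}_\infty)_{n+1}$; I seek $F \in [a,E]$ such that $r_{n+1}[a,F]$ is entirely in $\O$ or entirely in $\O^c$.

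First I set up the same coding machinery as in Theorem \ref{thm6}: the sequence $(q_i)_{i<\omega}$ with $p_{n+1+i}(E) \in P_{q_i}$, the growing sizes $t_i$ with $t_0 = 2^{q_0+1}$ and $t_i = t_{i-1}\cdot 2^{q_i+1}$, the nested alphabets $L_i = (n+1)^{t_i}$ and $L = \bigcup_i L_i$, together with the correspondence $w \mapsto b(w)$ between words in $W_L$ and end-extensions in $r_{n+1}[a,E]$. To accommodate condition (c)$_\I$, I insert a tilde operation $w \mapsto \widetilde{w}$ in direct analogy with the proof of Theorem \ref{thm2}: on each non-variable letter $w(i)$, set
\[ \pi_j(\widetilde{w}(i)) = \begin{cases}
\pi_j(w(i)) & \text{if joining $p_{n+1+i\cdot t_0+j}(E)$ to $p_{\pi_j(w(i))}(E)$ respects (c)$_\I$,} \\
0 & \text{otherwise,}
\end{cases} \]
and leave variable entries untouched. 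Then redefine $w \mapsto b(w)$ by first passing through $\widetilde{w}$, so that every $b(w)$ automatically lies in $\AE^{\P,\I}_\infty$ (the class with minimal representative $0$ is unconstrained, so replacing bad entries by $0$ is always legal).

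Next, I apply Theorem \ref{thm5} to the two-colouring $c(w) = 0 \iff b(w) \in \O$ (defined on all of $W_L$) to obtain a variable-free word $w_0$ and a sequence $X = (x_i)$ of left-variable words with ${w_0}^\frown [X]_L^*$ $c$-monochromatic. From $w_0$ and $X$ I build the variable-free word $u_0$ and the variable words $y_i$ exactly as in Theorem \ref{thm6}, paying careful attention to the placement of the single variable in each $y_i$ inside a block of length $t_i$ so as to land on an index $s$ with $q_s = q_i$, and form $y = u_0{}^\frown y_0{}^\frown y_1{}^\frown \cdots$. I then apply the tilde correction entry-by-entry to produce $\widetilde{y}$, keeping every variable in place but replacing any non-variable letter $k$ at position $i$ by $0$ whenever joining $p_{n+1+i}(E)$ to $p_k(E)$ would violate (c)$_\I$. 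The equivalence relation $F$ is defined from $\widetilde{y}$ on $p(E)$ in the usual way. Since $\widetilde{y}$ has the same variable placements as $y$, the argument of Theorem \ref{thm6} shows that $F$ is $\P$-alternating, and the tilde correction ensures that every $F$-class satisfies (c)$_\I$, so $F \in \E^{\P,\I}_\infty$ and $F \in [a,E]$.

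Verifying A.4 for $F$ requires one coherence check which I expect to be the main obstacle: namely, that the tilde operation applied in the construction of $F$ is compatible with the tilde operation built into the coding $b(\cdot)$. Concretely, I must show that every end-extension $b \in r_{n+1}[a,F]$ has the form $b = b(w)$ for some $w = {w_0}^\frown x_0[\lambda_0]^\frown \cdots ^\frown x_k[\lambda_k]$ with $\lambda_i \in L_i$, where the joins in $b$ produced from $\widetilde{y}$ agree with the joins produced from $\widetilde{w}$ via the redefined $b(\cdot)$. This comes down to observing that replacing a forbidden target by $0$ commutes with substitution into left-variable words --- the variable slots always receive fresh letters $\lambda_i$, and whether the corresponding join is permissible is already reflected in the tilde correction of $\widetilde{y}$ at the position where that variable was placed. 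Once this is established, monochromaticity of ${w_0}^\frown [X]_L^*$ under $c$ transfers directly to $r_{n+1}[a,F]$, giving A.4 and completing the proof that $(\E^{\P,\I}_\infty,\leq,r)$ is a topological Ramsey space.
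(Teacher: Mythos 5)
Your proposal follows exactly the route the paper intends: the paper gives no separate proof of this theorem, stating only that one modifies the proof of Theorem \ref{thm6} in the natural way, as in the proof of Theorem \ref{thm2}, and your combination of the infinite-alphabet coding and left-variable Hales--Jewett machinery of Theorem \ref{thm6} with the tilde correction of Theorem \ref{thm2} is precisely that modification. The only point to watch is that your fallback to the class of $0$ when a join is forbidden is automatically legal for the $\I$ arising in the paper's application (where $I_0 = \omega$ and $m \in I_n$ implies $I_m \subseteq I_n$), whereas for a completely arbitrary $\I$ satisfying only $n \in I_n$ one should instead redirect a forbidden join to some legal target among the first $n+1$ classes, whose existence is guaranteed by the nonemptiness of $r_{n+1}[a,E]$.
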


\begin{cor}\label{cor7}
Suppose $c$ is a finite Souslin measurable colouring of $\E^{\P, \I}_\infty$. Then for every $E \in \E^{\P, \I}_\infty$ there is $F \leq E$ such that the family $\E^{\P, \I}_\infty\restriction F$ of all coarsenings of $F$ is $c$-monochromatic.
\end{cor}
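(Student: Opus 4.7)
The plan is to obtain Corollary \ref{cor7} as an immediate application of Theorem \ref{thm7} together with the Souslin-measurable corollary of the Abstract Ellentuck Theorem recorded at the end of Section 2. Applied to the topological Ramsey space $(\E^{\P, \I}_\infty, \leq, r)$ with $B = E$, that corollary produces some $F \leq E$ in $\E^{\P, \I}_\infty$ such that $\{F' \in \E^{\P, \I}_\infty : F' \leq F\}$ is $c$-monochromatic. Since this set is by definition $\E^{\P, \I}_\infty \restriction F$, this is exactly what is claimed, and no further argument is required at the level of the corollary itself.

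All of the substance therefore sits in Theorem \ref{thm7}, which I would prove by adapting the proof of Theorem \ref{thm6} in the same way that the proof of Theorem \ref{thm2} adapts that of Theorem \ref{thm1}. Starting from the coding of end-extensions $b \in r_{n+1}[a, E]$ by words $w \in W_L$ over the infinite alphabet $L = \bigcup_i (n+1)^{t_i}$ introduced in the proof of Theorem \ref{thm6}, I would insert a projection $w \mapsto \widetilde{w}$ that replaces any coordinate $\pi_j(w(i))$ whose associated class-joining would violate condition (c)$_\I$ by a default legal value. The hypothesis $n \in I_n$ is precisely what guarantees such a default always exists, exactly as zero was always a legal sink in the proof of Theorem \ref{thm2}. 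The colouring $c(w) = 0 \iff b(w) \in \O$, with $b(w)$ now defined through the projection $\widetilde{w}$, is then fed into Theorem \ref{thm5} to produce a variable-free word $w_0$ and a sequence $X = (x_i)_{i<\omega}$ of left-variable words over $L$; from these one constructs the infinite variable word $y$ and its projection $\widetilde{y}$ exactly as in the proof of Theorem \ref{thm6}, and $\widetilde{y}$ defines the required $F$ on $p(E)$.

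The main thing to verify is that the projection does not damage either of the two features on which the proof of Theorem \ref{thm6} rests: the variable placement that forces $\P$-alternation, and the fact that the encoded end-extension depends on the word only through its projection. The first is automatic because the projection alters only constant letters and leaves every occurrence of $v$ untouched, so the intricate choice of variable positions via the sequence $(q_s)$ carries over verbatim and continues to guarantee $F \in \E^\P_\infty$. The second is built into the definitions of $\widetilde{w}$ and $\widetilde{y}$, so the monochromaticity furnished by Theorem \ref{thm5} on the translate ${w_0}^\frown [X]_L^*$ transfers directly to the end-extensions of $a$ realized inside $[a, F]$, while condition (c)$_\I$ for $F$ itself is enforced by the construction of $\widetilde{y}$.
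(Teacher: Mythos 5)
Your proposal is correct and follows the paper's own route exactly: Corollary \ref{cor7} is obtained by applying the Souslin-measurable corollary of the Abstract Ellentuck Theorem to the space $(\E^{\P,\I}_\infty,\leq,r)$ furnished by Theorem \ref{thm7}, and your sketch of Theorem \ref{thm7} --- grafting the projection $w \mapsto \widetilde{w}$ from the proof of Theorem \ref{thm2} onto the infinite-alphabet argument of Theorem \ref{thm6} --- is precisely the ``natural modification'' the paper indicates. No divergence to report.
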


Our next goal is to apply Corollary \ref{cor7} to obtain dual Ramsey results for countable limit ordinals above or equal to $\omega^2$. Unless otherwise specified, for the rest of this section we fix a countable limit ordinal $\alpha \geq \omega^2$. Using left division of ordinals, we find a countable ordinal $\beta \geq \omega$ such that $\alpha = \omega \cdot \beta$. (The fact that the remainder is $0$ follows from the assumption that $\alpha$ is a limit.) Fix a bijection $f : \omega \rightarrow \beta$ such that $f(0) = 0$. Let $\P$ be any partition of $\omega$ as above, and identify each $P_n$ with the $f(n)^{\mathrm{th}}$ copy of $\omega$ in $\alpha$, i.e. the set $\omega \times \{f(n)\}$. We remark that unlike the case for the ordinals $\omega \cdot l$, in general there is no natural way to ``order'' the copies of $\omega$ in $\alpha$ via a bijection $f$. However, for an ordinal $\alpha < \varepsilon_0$, there is a ``default'' choice for $f$ given by first constructing a canonical bijection between $\omega$ and $\varepsilon_0$ using the Cantor normal form and then restricting to $\alpha$. Since we will not make use of this canonical bijection, we leave the details to the interested reader.

As before, we work with equivalence relations on $\alpha$ which arise from rigid surjections $\alpha \rightarrow \alpha$, and so it is necessary to prescribe conditions on the set of minimal representatives of such an equivalence relation. In this case, we ask that the representatives behave according to the sequence used to define the space $\E^\P_\infty$ introduced above. To formalize this requirement, let $\sigma$ be the sequence used in the part (b) of the definition of the space $\E^\P_\infty$ and consider the bijection $\omega \rightarrow \alpha$ defined by $$n \mapsto \big(| \{m < n : \sigma(m) = \sigma(n)\} |, f(\sigma(n))\big).$$ Then $\alpha$ inherits a linear ordering $\preceq_f$ via this bijection in such a way that $(\alpha, \preceq_f)$ has order type $\omega$. If we enumerate the elements of $\alpha$ as $(\gamma_n)_{n < \omega}$ in $\preceq_f$-increasing order, then $$\gamma_0, \gamma_2, \gamma_4, \dots \in \omega \times \{f(0)\},$$ $$\gamma_1, \gamma_5, \gamma_9, \dots \in \omega \times \{f(1)\},$$ $$\gamma_3, \gamma_{11}, \gamma_{19}, \dots \in \omega \times \{f(2)\},$$ $$\vdots$$ etc., and in general $\gamma_n \in \omega \times \{f(\sigma(n))\}$, so that $(\alpha, \preceq_f)$ is ordered according to the sequence $\sigma$. Let $\E^f_\infty(\alpha)$ denote the set of all equivalence relations $E$ on $\alpha$ such that:
	\begin{enumerate}[(a)]
		\item If $(p_k)_{k<\omega}$ is a $\preceq_f$-increasing enumeration of $p(E)$, then $p_k \in \omega \times \{f(\sigma(k))\}$ for each $k$.
		\item  If $(q_k)_{k<\omega}$ is a $\preceq_f$-increasing enumeration of the set of $\preceq_f$-minimal representatives of $E$, then $q_k = p_k$ for all $k$.
	\end{enumerate}
For each $n$, let $$I_n = \{m < \omega : f(m) \geq f(n)\}$$ and $\I = (I_n)_{n<\omega}$. As in the case for $\E_\infty(\omega \cdot l)$, we can define bijections $$\varphi : \omega \rightarrow \alpha \text{ and } \Phi : \E^{\P, \I}_\infty \rightarrow \E^f_\infty(\alpha)$$ which allow us to transfer equivalence relations on $\omega$ to equivalence relations on $\alpha$. Note that condition (c)$_\I$ ensures that $\phi(p(E)) = p(\Phi(E))$ for each $E \in \E^{\P, \I}_\infty$.

Below, the notion of Souslin measurability can refer to either the topology on $\E^f_\infty(\alpha)$ inherited from $\E^{\P, \I}_\infty$ via $\Phi$ (where the latter set is equipped with the Ellentuck topology or the metrizable topology), or the topology induced from $2^{\alpha^2}$ when each equivalence relation in $\E^f_\infty(\alpha)$ is identified with a subset of $\alpha^2$. The following results extend the dual Silver and dual Ramsey theorems to the ordinal $\alpha$. The proofs are exactly as in the previous section.

\begin{cor}\label{maincor2}
Suppose $c$ is a finite Souslin measurable colouring of $\E^f_\infty(\alpha)$. Then for every $E \in \E^f_\infty(\alpha)$ there is $F \leq E$ such that the family $\E^f_\infty(\alpha)\restriction F$ of all coarsenings of $F$ is $c$-monochromatic.
\end{cor}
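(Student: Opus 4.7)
The plan is to mirror the argument used for Corollary \ref{maincor1}, transferring the colouring problem from $\E^f_\infty(\alpha)$ back to $\E^{\P,\I}_\infty$ via the bijection $\Phi$ and then invoking Corollary \ref{cor7}. Concretely, given $c$ and $E$ as in the statement, I would pull $c$ back to the colouring $\widetilde{c} = c \circ \Phi$ of $\E^{\P,\I}_\infty$ and then apply Corollary \ref{cor7} to $\widetilde{c}$ and to $\Phi^{-1}(E) \in \E^{\P,\I}_\infty$ to obtain $F \in \E^{\P,\I}_\infty$ with $F \leq \Phi^{-1}(E)$ such that $\widetilde{c}$ is constant, say of value $i$, on $\E^{\P,\I}_\infty \restriction F$. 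I would then claim that $\Phi(F)$ witnesses the desired conclusion for $E$ and $c$.

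Before that argument goes through, I need the analogue of the lemma used in Section 3: that $\Phi : \E^{\P,\I}_\infty \to \E^f_\infty(\alpha)$ is an order-preserving bijection. The forward step, that $\Phi$ is well-defined and carries coarsenings to coarsenings, is immediate from the fact that $\varphi$ is a bijection $\omega \to \alpha$. The substantive point is that $\Phi$ preserves the set of minimal representatives in the sense that $\varphi(p(E)) = p(\Phi(E))$; this is where condition (c)$_\I$ plays the role that condition (c) played in Section 3. Specifically, if $X$ is a $P_n$-class of $E$, then every $x \in X$ lies in some $P_m$ with $f(m) \geq f(n)$, so $\varphi(x) \in \omega \times \{f(m)\}$ with $f(m) \geq f(n)$; if $f(m) > f(n)$ then $\varphi(x)$ is strictly above $\varphi(\min X)$ in the ordinal ordering on $\alpha$, and if $f(m) = f(n)$ then $\varphi(x) \geq \varphi(\min X)$ within the same copy of $\omega$. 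This shows $\varphi(\min X) = \min \varphi''X$, and the condition that the $\preceq_f$-increasing enumeration of $p(\Phi(E))$ lies in the prescribed copies of $\omega$ is automatic from the definition of $\preceq_f$ and the fact that $E \in \E^{\P,\I}_\infty$ satisfies condition (b). The inverse map is defined symmetrically, and bijectivity of $\Phi$ follows routinely.

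Once the lemma is in hand, the corollary follows quickly. Souslin measurability of $\widetilde{c}$ holds because $\Phi$ is a homeomorphism when $\E^f_\infty(\alpha)$ is given the topology transported from $\E^{\P,\I}_\infty$ via $\Phi$, and one checks (exactly as in Section 3) that both this topology and the one induced from $2^{\alpha^2}$ are coarser than the Ellentuck topology on $\E^{\P,\I}_\infty$, so Souslin measurability in any of the three senses pulls back to Souslin measurability in $\E^{\P,\I}_\infty$. Given an arbitrary $F' \in \E^f_\infty(\alpha)$ with $F' \leq \Phi(F)$, the order-preservation property gives $\Phi^{-1}(F') \leq F$, hence
\[
c(F') = \widetilde{c}(\Phi^{-1}(F')) = i,
\]
so $\E^f_\infty(\alpha)\restriction \Phi(F)$ is $c$-monochromatic, as required.

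The only real obstacle is establishing the bijection lemma cleanly, and in particular verifying that condition (c)$_\I$ is the correct analogue of condition (c) to force $\varphi(p(E)) = p(\Phi(E))$ for \emph{every} $E \in \E^{\P,\I}_\infty$. Because $f$ need not be monotone in any natural sense (there is no canonical ordering of the copies of $\omega$ inside $\alpha$ in this generality), the definition $I_n = \{m < \omega : f(m) \geq f(n)\}$ is what ensures that a $P_n$-class of $E$ maps into a subset of $\alpha$ whose minimum in the ordinal ordering is $\varphi(\min X)$. Once this is verified, everything else is the same bookkeeping as in the proof of Corollary \ref{maincor1}.
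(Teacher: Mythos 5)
Your proposal is correct and follows exactly the route the paper takes: the paper's proof of Corollary \ref{maincor2} is simply the transfer argument from Corollary \ref{maincor1} repeated verbatim, pulling the colouring back through $\Phi$ to $\E^{\P,\I}_\infty$, applying Corollary \ref{cor7}, and pushing the resulting $F$ forward, with condition (c)$_\I$ playing precisely the role you identify in guaranteeing $\varphi(p(E)) = p(\Phi(E))$. Your verification of the bijection lemma is in fact more detailed than what the paper records.
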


To state the corresponding dual Ramsey theorem for $\alpha$, fix $k < \omega$ and a bijection $f$ as above. Let $\E^f_k(\alpha)$ denote the set of all equivalence relations $E$ on $\alpha$ with exactly $k$ equivalence classes such that if $(p_i)_{i<k}$ is a $\preceq_f$-increasing enumeration of $p(E)$, then $p_i \in \omega \times \{ f(\sigma(i)) \}$ for all $i < k$, and if $(q_i)_{i<k}$ is a $\preceq_f$-increasing enumeration of the set of $\preceq_f$-minimal representatives of $E$, then $q_i = p_i$ for all $i<k$. Equip $\E^f_k(\alpha)$ with the topology inherited from $2^{\alpha^2}$ by identifying each equivalence relation in $\E^f_k(\alpha)$ with a subset of $\alpha^2$. 

\begin{cor}
Suppose $c$ is a finite Souslin measurable colouring of $\E^f_k(\alpha)$. Then for every $E \in \E^f_\infty(\alpha)$ there is $F \leq E$ such that the family $\E^f_k(\alpha)\restriction F$ of all $k$-coarsenings of $F$ is $c$-monochromatic.
\end{cor}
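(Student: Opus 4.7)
The plan is to reduce the statement to the Silver-type Corollary \ref{maincor2} via a continuous projection $\pi : \E^f_\infty(\alpha) \to \E^f_k(\alpha)$, mimicking the proof of the previous corollary in the $\omega \cdot l$ setting. Concretely, I would define $\pi(E')$, for $E' \in \E^f_\infty(\alpha)$, to be the equivalence relation obtained from $E'$ by joining, onto the class of $p_0(E')$, every equivalence class with $\preceq_f$-minimum $p_n(E')$ for $n \geq k$.

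The first task is to verify that $\pi(E') \in \E^f_k(\alpha)$. A key preliminary observation is that $p_0(E') = 0$ for every $E' \in \E^f_\infty(\alpha)$: by construction of the enumeration $(\gamma_n)_{n<\omega}$ used to define $\preceq_f$, the ordinal $0 \in \alpha$ is precisely $\gamma_0$ (since $\sigma(0)=0$ and $f(0)=0$), and then condition (b) of $\E^f_\infty(\alpha)$ forces the class of $0$ to have $0$ as its $\preceq_f$-minimum. Consequently the merged class of $\pi(E')$ retains $0$ as its ordinary minimum, while the remaining $k-1$ classes of $\pi(E')$ coincide with classes of $E'$ and inherit conditions (a) and (b). Continuity of $\pi$, with respect to the topologies inherited from $\E^{\P, \I}_\infty$ and from $2^{\alpha^2}$ respectively, follows because whether a pair $(x, y) \in \alpha^2$ lies in $\pi(E')$ is decided by the $\preceq_f$-indices of the $E'$-classes of $x$ and $y$, and this information is encoded in a finite initial approximation of $E'$.

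Since $c \circ \pi$ is then a finite Souslin measurable colouring of $\E^f_\infty(\alpha)$, Corollary \ref{maincor2} produces $F \in \E^f_\infty(\alpha)$ with $F \leq E$ such that $c \circ \pi$ is constant on $\E^f_\infty(\alpha) \restriction F$. I would next pick a coarsening $F^* \leq F$ in $\E^f_\infty(\alpha)$ in which every equivalence class absorbs infinitely many $F$-classes, and claim that every $F' \in \E^f_k(\alpha) \restriction F^*$ can be realized as $F' = \pi(G)$ for some $G \in \E^f_\infty(\alpha) \restriction F$. Combining these facts yields $c(F') = c(\pi(G))$ equal to the common constant value of $c \circ \pi$, so $\E^f_k(\alpha) \restriction F^*$ is $c$-monochromatic, witnessing the corollary with $F^*$ as the desired coarsening of $E$.

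The main obstacle is the realization step $F' = \pi(G)$. The $k-1$ non-distinguished classes of $F'$ can be used directly as $G$-classes, but one must partition the distinguished class of $F'$ (the one containing $0$) into a $\preceq_f$-increasing sequence of $G$-classes whose $\preceq_f$-minima obey the pattern $p_n(G) \in \omega \times \{f(\sigma(n))\}$ for $n \geq k$, while keeping $G$ a coarsening of $F$ and respecting condition (c)$_\I$. The delicate point is verifying that the $\P$-alternating structure of $F$, together with the abundance of $F$-classes inside every $F^*$-class, supplies $F$-classes with $\preceq_f$-minima in each required copy $\omega \times \{f(m)\}$ cofinally often, so that the $\preceq_f$-ordered selection dictated by $\sigma$ can indeed be carried out.
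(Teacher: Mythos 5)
Your proposal is correct and is essentially the paper's own argument: the paper proves this corollary by stating that the proof is exactly as for $\omega\cdot l$, namely the continuous projection $\pi$ collapsing all classes of index $\geq k$ onto the class of $0$, an application of Corollary \ref{maincor2} to $c\circ\pi$, and a further coarsening $F^*$ each of whose classes absorbs infinitely many $F$-classes. The realization step $F'=\pi(G)$ that you flag as delicate is glossed over by the paper as well; it is most cleanly handled by choosing $F^*$ so that each of its classes contains $F$-classes with minima in every copy $\omega\times\{f(m)\}$ cofinally often, which the $\P$-alternating structure of $F$ makes possible.
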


At this point, the reader will note that we have only been concerned with the establishment of dual Ramsey theorems for countable limit ordinals. However, it is not difficult to verify that the methods of this paper can also be used to obtain similar results for a countable successor ordinal $\alpha$ if one is willing to work with surjections $f : \alpha \rightarrow \alpha$ which are rigid on the ``limit'' part of $\alpha$; that is, writing $\alpha = \beta + n$ for a limit ordinal $\beta$ and $n<\omega$, we require $f\restriction \beta : \beta \rightarrow \beta$ to be a rigid surjection. On the other hand, any rigid surjection $f : \beta + n \rightarrow \beta + n$ is necessarily equal to the identity when restricted to the interval $[\beta, \beta + n)$ and so the corresponding rigid surjection version of the dual Ramsey theorem follows from the dual Ramsey theorem for $\beta$.

We conclude by noting that the results of this paper can be seen more generally as part of a program which studies the linear orderings for which a version of the dual Ramsey theorem holds. (It is worth mentioning here that some results along these lines are known regarding the dual Ramsey behaviour of trees; see, for instance, \cite{Solecki, TT}.) While the classical versions (corresponding to the infinite Ramsey theorem and the Galvin-Prikry theorem) of this problem have been well-studied, much remains unknown in the dual context, where one works with an appropriate notion of rigid surjection in place of the order-preserving injections which appear in the formulation of classical Ramsey-theoretic results on linear orderings. In this paper we have seen that this is possible for countable ordinals, but it is unclear if the methods can be extended to study the dual Ramsey behaviour of a wider class of linear orderings. Hence, the following general problem remains open:

\begin{prob}
Find and characterize linear orderings for which a version of the dual Ramsey theorem holds.
\end{prob}

\bibliographystyle{plain}
\bibliography{bibliography}

\end{document}